\newtheorem{theorem}{Theorem}
\theoremstyle{plain}
\newtheorem{axiom}{Axiom}
\newtheorem{conjecture}{Conjecture}
\newtheorem{corollary}{Corollary}
\newtheorem{definition}{Definition}
\newtheorem{example}{Example}
\newtheorem{exercise}{Exercise}
\newtheorem{lemma}{Lemma}
\newtheorem{proposition}{Proposition}
\newtheorem{remark}{Remark}
\numberwithin{equation}{section}
\chardef\@x10\chardef\@xv60
\def\tcitime{
\def\@time{%
  \@minute\time\@hour\@minute\divide\@hour\@xv
  \ifnum\@hour<\@x 0\fi\the\@hour:%
  \multiply\@hour\@xv\advance\@minute-\@hour
  \ifnum\@minute<\@x 0\fi\the\@minute
  }}%
\def\QCTOpt[#1]#2{%
  \def\QCTOptB{#1}
  \def\QCTOptA{#2}
}
\def\QCTNOpt#1{%
  \def\QCTOptA{#1}
  \let\QCTOptB\empty
}
\def\Qct{%
  \@ifnextchar[{%
    \QCTOpt}{\QCTNOpt}
}
\def\QCBOpt[#1]#2{%
  \def\QCBOptB{#1}
  \def\QCBOptA{#2}
}
\def\QCBNOpt#1{%
  \def\QCBOptA{#1}
  \let\QCBOptB\empty
}
\def\Qcb{%
  \@ifnextchar[{%
    \QCBOpt}{\QCBNOpt}
}
\def\PrepCapArgs{%
  \ifx\QCBOptA\empty
    \ifx\QCTOptA\empty
      {}%
    \else
      \ifx\QCTOptB\empty
        {\QCTOptA}%
      \else
        [\QCTOptB]{\QCTOptA}%
      \fi
    \fi
  \else
    \ifx\QCBOptA\empty
      {}%
    \else
      \ifx\QCBOptB\empty
        {\QCBOptA}%
      \else
        [\QCBOptB]{\QCBOptA}%
      \fi
    \fi
  \fi
}
\def\GRAPHICSPS#1{%
 \ifcase\GRAPHICSTYPE
   \special{ps: #1}%
 \or
   \special{language "PS", include "#1"}%
 \fi
}%
\def\graffile#1#2#3#4{%
    \leavevmode
    \raise -#4 \BOXTHEFRAME{%
        \hbox to #2{\raise #3\hbox to #2{\null #1\hfil}}}%
}%
\def\draftbox#1#2#3#4{%
 \leavevmode\raise -#4 \hbox{%
  \frame{\rlap{\protect\tiny #1}\hbox to #2%
   {\vrule height#3 width\z@ depth\z@\hfil}%
  }%
 }%
}%
\newif\ifwasdraft
\def\GRAPHIC#1#2#3#4#5{%
 \ifnum\draft=\@ne\draftbox{#2}{#3}{#4}{#5}%
  \else\graffile{#1}{#3}{#4}{#5}%
  \fi
 }%
\def\addtoLaTeXparams#1{%
    \edef\LaTeXparams{\LaTeXparams #1}}%
\newif\ifBoxFrame \BoxFramefalse
\newif\ifOverFrame \OverFramefalse
\newif\ifUnderFrame \UnderFramefalse
\def\BOXTHEFRAME#1{%
   \hbox{%
      \ifBoxFrame
         \frame{#1}%
      \else
         {#1}%
      \fi
   }%
}
\def\doFRAMEparams#1{\BoxFramefalse\OverFramefalse\UnderFramefalse\readFRAMEparams#1\end}%
\def\readFRAMEparams#1{%
 \ifx#1\end%
  \let\next=\relax
  \else
  \ifx#1i\dispkind=\z@\fi
  \ifx#1d\dispkind=\@ne\fi
  \ifx#1f\dispkind=\tw@\fi
  \ifx#1t\addtoLaTeXparams{t}\fi
  \ifx#1b\addtoLaTeXparams{b}\fi
  \ifx#1p\addtoLaTeXparams{p}\fi
  \ifx#1h\addtoLaTeXparams{h}\fi
  \ifx#1X\BoxFrametrue\fi
  \ifx#1O\OverFrametrue\fi
  \ifx#1U\UnderFrametrue\fi
  \ifx#1w
    \ifnum\draft=1\wasdrafttrue\else\wasdraftfalse\fi
    \draft=\@ne
  \fi
  \let\next=\readFRAMEparams
  \fi
 \next
 }%
\def\IFRAME#1#2#3#4#5#6{%
      \bgroup
      \let\QCTOptA\empty
      \let\QCTOptB\empty
      \let\QCBOptA\empty
      \let\QCBOptB\empty
      #6%
      \parindent=0pt%
      \leftskip=0pt
      \rightskip=0pt
      \setbox0 = \hbox{\QCBOptA}%
      \@tempdima = #1\relax
      \ifOverFrame
          \typeout{This is not implemented yet}%
          \show\HELP
      \else
         \ifdim\wd0>\@tempdima
            \advance\@tempdima by \@tempdima
            \ifdim\wd0 >\@tempdima
               \textwidth=\@tempdima
               \setbox1 =\vbox{%
                  \noindent\hbox to \@tempdima{\hfill\GRAPHIC{#5}{#4}{#1}{#2}{#3}\hfill}\\%
                  \noindent\hbox to \@tempdima{\parbox[b]{\@tempdima}{\QCBOptA}}%
               }%
               \wd1=\@tempdima
            \else
               \textwidth=\wd0
               \setbox1 =\vbox{%
                 \noindent\hbox to \wd0{\hfill\GRAPHIC{#5}{#4}{#1}{#2}{#3}\hfill}\\%
                 \noindent\hbox{\QCBOptA}%
               }%
               \wd1=\wd0
            \fi
         \else
            \ifdim\wd0>0pt
              \hsize=\@tempdima
              \setbox1 =\vbox{%
                \unskip\GRAPHIC{#5}{#4}{#1}{#2}{0pt}%
                \break
                \unskip\hbox to \@tempdima{\hfill \QCBOptA\hfill}%
              }%
              \wd1=\@tempdima
           \else
              \hsize=\@tempdima
              \setbox1 =\vbox{%
                \unskip\GRAPHIC{#5}{#4}{#1}{#2}{0pt}%
              }%
              \wd1=\@tempdima
           \fi
         \fi
         \@tempdimb=\ht1
         \advance\@tempdimb by \dp1
         \advance\@tempdimb by -#2%
         \advance\@tempdimb by #3%
         \leavevmode
         \raise -\@tempdimb \hbox{\box1}%
      \fi
      \egroup%
}%
\def\DFRAME#1#2#3#4#5{%
 \begin{center}
     \let\QCTOptA\empty
     \let\QCTOptB\empty
     \let\QCBOptA\empty
     \let\QCBOptB\empty
     \ifOverFrame 
        #5\QCTOptA\par
     \fi
     \GRAPHIC{#4}{#3}{#1}{#2}{\z@}
     \ifUnderFrame 
        \nobreak\par #5\QCBOptA
     \fi
 \end{center}%
 }%
\def\FFRAME#1#2#3#4#5#6#7{%
 \begin{figure}[#1]%
  \let\QCTOptA\empty
  \let\QCTOptB\empty
  \let\QCBOptA\empty
  \let\QCBOptB\empty
  \ifOverFrame
    #4
    \ifx\QCTOptA\empty
    \else
      \ifx\QCTOptB\empty
        \caption{\QCTOptA}%
      \else
        \caption[\QCTOptB]{\QCTOptA}%
      \fi
    \fi
    \ifUnderFrame\else
      \label{#5}%
    \fi
  \else
    \UnderFrametrue%
  \fi
  \begin{center}\GRAPHIC{#7}{#6}{#2}{#3}{\z@}\end{center}%
  \ifUnderFrame
    #4
    \ifx\QCBOptA\empty
      \caption{}%
    \else
      \ifx\QCBOptB\empty
        \caption{\QCBOptA}%
      \else
        \caption[\QCBOptB]{\QCBOptA}%
      \fi
    \fi
    \label{#5}%
  \fi
  \end{figure}%
 }%
\def\makeactives{
  \catcode`\"=\active
  \catcode`\;=\active
  \catcode`\:=\active
  \catcode`\'=\active
  \catcode`\~=\active
}
   \gdef\activesoff{%
      \def"{\string"}
      \def;{\string;}
      \def:{\string:}
      \def'{\string'}
      \def~{\string~}
    }
\def\FRAME#1#2#3#4#5#6#7#8{%
 \bgroup
 \@ifundefined{bbl@deactivate}{}{\activesoff}
 \ifnum\draft=\@ne
   \wasdrafttrue
 \else
   \wasdraftfalse%
 \fi
 \def\LaTeXparams{}%
 \dispkind=\z@
 \def\LaTeXparams{}%
 \doFRAMEparams{#1}%
 \ifnum\dispkind=\z@\IFRAME{#2}{#3}{#4}{#7}{#8}{#5}\else
  \ifnum\dispkind=\@ne\DFRAME{#2}{#3}{#7}{#8}{#5}\else
   \ifnum\dispkind=\tw@
    \edef\@tempa{\noexpand\FFRAME{\LaTeXparams}}%
    \@tempa{#2}{#3}{#5}{#6}{#7}{#8}%
    \fi
   \fi
  \fi
  \ifwasdraft\draft=1\else\draft=0\fi{}%
  \egroup
 }%
\def\TEXUX#1{"texux"}
\long\def\QQQ#1#2{%
     \long\expandafter\def\csname#1\endcsname{#2}}%
\long\def\QQA#1#2{}%
\def\QTR#1#2{{\csname#1\endcsname #2}}
\def\EXPAND#1[#2]#3{}%
\def\NOEXPAND#1[#2]#3{}%
\def\LaTeXparent#1{}%
\def\ChildStyles#1{}%
\def\ChildDefaults#1{}%
\def\QTagDef#1#2#3{}%
\def\QQfnmark#1{\footnotemark}
\def\makeatletter\input gnuindex.sty\makeatother\makeindex{\makeatletter\input gnuindex.sty\makeatother\makeindex}%
\def\initial#1{\bigbreak{\raggedright\large\bf #1}\kern 2\p@\penalty3000}}%
 \def\abstract{%
  \if@twocolumn
   \section*{Abstract (Not appropriate in this style!)}%
   \else \small 
   \begin{center}{\bf Abstract\vspace{-.5em}\vspace{\z@}}\end{center}%
   \quotation 
   \fi
  }%
   \def\registered{\relax\ifmmode{}\r@gistered
                    \else$\m@th\r@gistered$\fi}%
 \def\r@gistered{^{\ooalign
  {\hfil\raise.07ex\hbox{$\scriptstyle\rm\text{R}$}\hfil\crcr
  \mathhexbox20D}}}}{}%
\newdimen\theight
\def\Column{%
 \vadjust{\setbox\z@=\hbox{\scriptsize\quad\quad tcol}%
  \theight=\ht\z@\advance\theight by \dp\z@\advance\theight by \lineskip
  \kern -\theight \vbox to \theight{%
   \rightline{\rlap{\box\z@}}%
   \vss
   }%
  }%
 }%
\def\qed{%
 \ifhmode\unskip\nobreak\fi\ifmmode\ifinner\else\hskip5\p@\fi\fi
 \hbox{\hskip5\p@\vrule width4\p@ height6\p@ depth1.5\p@\hskip\p@}%
 }%
\def\miss{\hbox{\vrule height2\p@ width 2\p@ depth\z@}}%
\def\tcol#1{{\baselineskip=6\p@ \vcenter{#1}} \Column}  %
\def\newfmtname{LaTeX2e}
\def\chkcompat{%
   \if@compatibility
   \else
     \usepackage{latexsym}
   \fi
}
  \DeclareOldFontCommand{\rm}{\normalfont\rmfamily}{\mathrm}
  \DeclareOldFontCommand{\sf}{\normalfont\sffamily}{\mathsf}
  \DeclareOldFontCommand{\tt}{\normalfont\ttfamily}{\mathtt}
  \DeclareOldFontCommand{\bf}{\normalfont\bfseries}{\mathbf}
  \DeclareOldFontCommand{\it}{\normalfont\itshape}{\mathit}
  \DeclareOldFontCommand{\sl}{\normalfont\slshape}{\@nomath\sl}
  \DeclareOldFontCommand{\sc}{\normalfont\scshape}{\@nomath\sc}
\def\alpha{\Greekmath 010B }%
\def\beta{\Greekmath 010C }%
\def\gamma{\Greekmath 010D }%
\def\delta{\Greekmath 010E }%
\def\epsilon{\Greekmath 010F }%
\def\zeta{\Greekmath 0110 }%
\def\eta{\Greekmath 0111 }%
\def\theta{\Greekmath 0112 }%
\def\iota{\Greekmath 0113 }%
\def\kappa{\Greekmath 0114 }%
\def\lambda{\Greekmath 0115 }%
\def\mu{\Greekmath 0116 }%
\def\nu{\Greekmath 0117 }%
\def\xi{\Greekmath 0118 }%
\def\pi{\Greekmath 0119 }%
\def\rho{\Greekmath 011A }%
\def\sigma{\Greekmath 011B }%
\def\tau{\Greekmath 011C }%
\def\upsilon{\Greekmath 011D }%
\def\phi{\Greekmath 011E }%
\def\chi{\Greekmath 011F }%
\def\psi{\Greekmath 0120 }%
\def\omega{\Greekmath 0121 }%
\def\varepsilon{\Greekmath 0122 }%
\def\vartheta{\Greekmath 0123 }%
\def\varpi{\Greekmath 0124 }%
\def\varrho{\Greekmath 0125 }%
\def\varsigma{\Greekmath 0126 }%
\def\varphi{\Greekmath 0127 }%
\def\nabla{\Greekmath 0272 }
\def\FindBoldGroup{%
   {\setbox0=\hbox{$\mathbf{x\global\edef\theboldgroup{\the\mathgroup}}$}}%
}
\def\Greekmath#1#2#3#4{%
    \if@compatibility
        \ifnum\mathgroup=\symbold
           \mathchoice{\mbox{\boldmath$\displaystyle\mathchar"#1#2#3#4$}}%
                      {\mbox{\boldmath$\textstyle\mathchar"#1#2#3#4$}}%
                      {\mbox{\boldmath$\scriptstyle\mathchar"#1#2#3#4$}}%
                      {\mbox{\boldmath$\scriptscriptstyle\mathchar"#1#2#3#4$}}%
        \else
           \mathchar"#1#2#3#4%
        \fi 
    \else 
        \FindBoldGroup
        \ifnum\mathgroup=\theboldgroup 
           \mathchoice{\mbox{\boldmath$\displaystyle\mathchar"#1#2#3#4$}}%
                      {\mbox{\boldmath$\textstyle\mathchar"#1#2#3#4$}}%
                      {\mbox{\boldmath$\scriptstyle\mathchar"#1#2#3#4$}}%
                      {\mbox{\boldmath$\scriptscriptstyle\mathchar"#1#2#3#4$}}%
        \else
           \mathchar"#1#2#3#4%
        \fi     	    
	  \fi}
\newif\ifGreekBold  \GreekBoldfalse
\let\SAVEPBF=\pbf
\def\pbf{\GreekBoldtrue\SAVEPBF}%
  \newcounter{equationnumber}  
  \def\mathletters{%
     \addtocounter{equation}{1}
     \edef\@currentlabel{\theequation}%
     \setcounter{equationnumber}{\c@equation}
     \setcounter{equation}{0}%
     \edef\theequation{\@currentlabel\noexpand\alph{equation}}%
  }
    \def\BibTeX{{\rm B\kern-.05em{\sc i\kern-.025em b}\kern-.08em
                 T\kern-.1667em\lower.7ex\hbox{E}\kern-.125emX}}}{}%
\def\AmS{{\protect\usefont{OMS}{cmsy}{m}{n}%
                A\kern-.1667em\lower.5ex\hbox{M}\kern-.125emS}}}{}%
\let\DOTSI\relax
\def\RIfM@{\relax\ifmmode}%
\def\FN@{\futurelet\next}%
\def\iint{\DOTSI\intno@\tw@\FN@\ints@}%
\def\iiint{\DOTSI\intno@\thr@@\FN@\ints@}%
\def\iiiint{\DOTSI\intno@4 \FN@\ints@}%
\def\idotsint{\DOTSI\intno@\z@\FN@\ints@}%
\def\ints@{\findlimits@\ints@@}%
\newif\iflimtoken@
\newif\iflimits@
\def\findlimits@{\limtoken@true\ifx\next\limits\limits@true
 \else\ifx\next\nolimits\limits@false\else
 \limtoken@false\ifx\ilimits@\nolimits\limits@false\else
 \ifinner\limits@false\else\limits@true\fi\fi\fi\fi}%
\def\multint@{\int\ifnum\intno@=\z@\intdots@                          
 \else\intkern@\fi                                                    
 \ifnum\intno@>\tw@\int\intkern@\fi                                   
 \ifnum\intno@>\thr@@\int\intkern@\fi                                 
 \int}
\def\multintlimits@{\intop\ifnum\intno@=\z@\intdots@\else\intkern@\fi
 \ifnum\intno@>\tw@\intop\intkern@\fi
 \ifnum\intno@>\thr@@\intop\intkern@\fi\intop}%
\def\intic@{%
    \mathchoice{\hskip.5em}{\hskip.4em}{\hskip.4em}{\hskip.4em}}%
\def\negintic@{\mathchoice
 {\hskip-.5em}{\hskip-.4em}{\hskip-.4em}{\hskip-.4em}}%
\def\ints@@{\iflimtoken@                                              
 \def\ints@@@{\iflimits@\negintic@
   \mathop{\intic@\multintlimits@}\limits                             
  \else\multint@\nolimits\fi                                          
  \eat@}
 \else                                                                
 \def\ints@@@{\iflimits@\negintic@
  \mathop{\intic@\multintlimits@}\limits\else
  \multint@\nolimits\fi}\fi\ints@@@}%
\def\intkern@{\mathchoice{\!\!\!}{\!\!}{\!\!}{\!\!}}%
\def\plaincdots@{\mathinner{\cdotp\cdotp\cdotp}}%
\def\intdots@{\mathchoice{\plaincdots@}%
 {{\cdotp}\mkern1.5mu{\cdotp}\mkern1.5mu{\cdotp}}%
 {{\cdotp}\mkern1mu{\cdotp}\mkern1mu{\cdotp}}%
 {{\cdotp}\mkern1mu{\cdotp}\mkern1mu{\cdotp}}}%
\def\RIfM@{\relax\protect\ifmmode}
\def\text{\RIfM@\expandafter\text@\else\expandafter\mbox\fi}
\let\nfss@text\text
\def\text@#1{\mathchoice
   {\textdef@\displaystyle\f@size{#1}}%
   {\textdef@\textstyle\tf@size{\firstchoice@false #1}}%
   {\textdef@\textstyle\sf@size{\firstchoice@false #1}}%
   {\textdef@\textstyle \ssf@size{\firstchoice@false #1}}%
   \glb@settings}
\def\textdef@#1#2#3{\hbox{{%
                    \everymath{#1}%
                    \let\f@size#2\selectfont
                    #3}}}
\newif\iffirstchoice@
\def\Let@{\relax\iffalse{\fi\let\\=\cr\iffalse}\fi}%
\def\vspace@{\def\vspace##1{\crcr\noalign{\vskip##1\relax}}}%
\def\multilimits@{\bgroup\vspace@\Let@
 \baselineskip\fontdimen10 \scriptfont\tw@
 \advance\baselineskip\fontdimen12 \scriptfont\tw@
 \lineskip\thr@@\fontdimen8 \scriptfont\thr@@
 \lineskiplimit\lineskip
 \vbox\bgroup\ialign\bgroup\hfil$\m@th\scriptstyle{##}$\hfil\crcr}%
\def\Sb{_\multilimits@}%
\def\endSb{\crcr\egroup\egroup\egroup}%
\def\Sp{^\multilimits@}%
\newdimen\ex@
\def\rightarrowfill@#1{$#1\m@th\mathord-\mkern-6mu\cleaders
 \hbox{$#1\mkern-2mu\mathord-\mkern-2mu$}\hfill
 \mkern-6mu\mathord\rightarrow$}%
\def\leftarrowfill@#1{$#1\m@th\mathord\leftarrow\mkern-6mu\cleaders
 \hbox{$#1\mkern-2mu\mathord-\mkern-2mu$}\hfill\mkern-6mu\mathord-$}%
\def\leftrightarrowfill@#1{$#1\m@th\mathord\leftarrow
\mkern-6mu\cleaders
 \hbox{$#1\mkern-2mu\mathord-\mkern-2mu$}\hfill
 \mkern-6mu\mathord\rightarrow$}%
\def\overrightarrow{\mathpalette\overrightarrow@}%
\def\overrightarrow@#1#2{\vbox{\ialign{##\crcr\rightarrowfill@#1\crcr
 \noalign{\kern-\ex@\nointerlineskip}$\m@th\hfil#1#2\hfil$\crcr}}}%
\def\overleftarrow{\mathpalette\overleftarrow@}%
\def\overleftarrow@#1#2{\vbox{\ialign{##\crcr\leftarrowfill@#1\crcr
 \noalign{\kern-\ex@\nointerlineskip}$\m@th\hfil#1#2\hfil$\crcr}}}%
\def\overleftrightarrow{\mathpalette\overleftrightarrow@}%
\def\overleftrightarrow@#1#2{\vbox{\ialign{##\crcr
   \leftrightarrowfill@#1\crcr
 \noalign{\kern-\ex@\nointerlineskip}$\m@th\hfil#1#2\hfil$\crcr}}}%
\def\underrightarrow{\mathpalette\underrightarrow@}%
\def\underrightarrow@#1#2{\vtop{\ialign{##\crcr$\m@th\hfil#1#2\hfil
  $\crcr\noalign{\nointerlineskip}\rightarrowfill@#1\crcr}}}%
\def\underleftarrow{\mathpalette\underleftarrow@}%
\def\underleftarrow@#1#2{\vtop{\ialign{##\crcr$\m@th\hfil#1#2\hfil
  $\crcr\noalign{\nointerlineskip}\leftarrowfill@#1\crcr}}}%
\def\underleftrightarrow{\mathpalette\underleftrightarrow@}%
\def\underleftrightarrow@#1#2{\vtop{\ialign{##\crcr$\m@th
  \hfil#1#2\hfil$\crcr
 \noalign{\nointerlineskip}\leftrightarrowfill@#1\crcr}}}%
\def\qopnamewl@#1{\mathop{\operator@font#1}\nlimits@}
\let\nlimits@\displaylimits
\def\setboxz@h{\setbox\z@\hbox}
\def\varlim@#1#2{\mathop{\vtop{\ialign{##\crcr
 \hfil$#1\m@th\operator@font lim$\hfil\crcr
 \noalign{\nointerlineskip}#2#1\crcr
 \noalign{\nointerlineskip\kern-\ex@}\crcr}}}}
 \def\rightarrowfill@#1{\m@th\setboxz@h{$#1-$}\ht\z@\z@
  $#1\copy\z@\mkern-6mu\cleaders
  \hbox{$#1\mkern-2mu\box\z@\mkern-2mu$}\hfill
  \mkern-6mu\mathord\rightarrow$}
\def\leftarrowfill@#1{\m@th\setboxz@h{$#1-$}\ht\z@\z@
  $#1\mathord\leftarrow\mkern-6mu\cleaders
  \hbox{$#1\mkern-2mu\copy\z@\mkern-2mu$}\hfill
  \mkern-6mu\box\z@$}
\def\projlim{\qopnamewl@{proj\,lim}}
\def\injlim{\qopnamewl@{inj\,lim}}
\def\varinjlim{\mathpalette\varlim@\rightarrowfill@}
\def\varprojlim{\mathpalette\varlim@\leftarrowfill@}
\def\varliminf{\mathpalette\varliminf@{}}
\def\varliminf@#1{\mathop{\underline{\vrule\@depth.2\ex@\@width\z@
   \hbox{$#1\m@th\operator@font lim$}}}}
\def\varlimsup{\mathpalette\varlimsup@{}}
\def\varlimsup@#1{\mathop{\overline
  {\hbox{$#1\m@th\operator@font lim$}}}}
\def\align{\@verbatim \frenchspacing\@vobeyspaces \@alignverbatim
You are using the "align" environment in a style in which it is not defined.}
\let\csname endalign*\endcsname =\endtrivlist
\def\alignat{\@verbatim \frenchspacing\@vobeyspaces \@alignatverbatim
You are using the "alignat" environment in a style in which it is not defined.}
\let\csname endalignat*\endcsname =\endtrivlist
\def\xalignat{\@verbatim \frenchspacing\@vobeyspaces \@xalignatverbatim
You are using the "xalignat" environment in a style in which it is not defined.}
\let\csname endxalignat*\endcsname =\endtrivlist
\def\gather{\@verbatim \frenchspacing\@vobeyspaces \@gatherverbatim
You are using the "gather" environment in a style in which it is not defined.}
\let\csname endgather*\endcsname =\endtrivlist
\def\multiline{\@verbatim \frenchspacing\@vobeyspaces \@multilineverbatim
You are using the "multiline" environment in a style in which it is not defined.}
\let\csname endmultiline*\endcsname =\endtrivlist
\def\arrax{\@verbatim \frenchspacing\@vobeyspaces \@arraxverbatim
You are using a type of "array" construct that is only allowed in AmS-LaTeX.}
\def\tabulax{\@verbatim \frenchspacing\@vobeyspaces \@tabulaxverbatim
You are using a type of "tabular" construct that is only allowed in AmS-LaTeX.}
\let\csname endarrax*\endcsname =\endtrivlist
\let\csname endtabulax*\endcsname =\endtrivlist
\def\@@eqncr{\let\@tempa\relax
    \ifcase\@eqcnt \def\@tempa{& & &}\or \def\@tempa{& &}%
      \else \def\@tempa{&}\fi
     \@tempa
     \if@eqnsw
        \iftag@
           \@taggnum
        \else
           \@eqnnum\stepcounter{equation}%
        \fi
     \fi
     \global\tag@false
     \global\@eqnswtrue
     \global\@eqcnt\z@\cr}
 \def\endequation{%
     \ifmmode\ifinner 
      \iftag@
        \addtocounter{equation}{-1} 
        $\hfil
           \displaywidth\linewidth\@taggnum\egroup \endtrivlist
        \global\tag@false
        \global\@ignoretrue   
      \else
        $\hfil
           \displaywidth\linewidth\@eqnnum\egroup \endtrivlist
        \global\tag@false
        \global\@ignoretrue 
      \fi
     \else   
      \iftag@
        \addtocounter{equation}{-1} 
        \eqno \hbox{\@taggnum}
        \global\tag@false%
        $$\global\@ignoretrue
      \else
        \eqno \hbox{\@eqnnum}
        $$\global\@ignoretrue
      \fi
     \fi\fi
 } 
 \newif\iftag@ \tag@false
 \def\tag{\@ifnextchar*{\@tagstar}{\@tag}}
 \def\@tag#1{%
     \global\tag@true
     \global\def\@taggnum{(#1)}}
 \def\@tagstar*#1{%
     \global\tag@true
     \global\def\@taggnum{#1}%
}
\newcommand{\term}[1]{\textit{#1}}
\newcommand{\mh}{\textsc{m}}
\newcommand{\ah}{\textsc{a}}
\newcommand{\lh}{\textsc{l}}
\newcommand{\eh}{\textsc{e}}
\newcommand{\dlh}{\bar{\textsc{l}}}
\newcommand{\R}{{\mathbb{R}}}
\newcommand{\N}{{\mathbb{N}}}
\begin{document}
\title[Homogeneity]{Homogeneity\\
}
\author{Martin Himmel}
\curraddr{Technical University Dresden,
 “Friedrich List” Faculty of Transport and Traffic Sciences,
  Institute of Transport and Economics, 
 Würzburger Str. 35,
 01187 Dresden,
 Germany}
\email{martin.himmel@tu-dresden.de}

\begin{abstract}
The four types of homogeneity - additive, multiplicative, exponential, and logarithmic -
are generalized as transformations describing how a function changes under scaling or shifting of its arguments. 
These generalized homogeneity functions capture different scaling behaviors and establish fundamental properties of $f$.

Such properties include how homogeneity is preserved under function operations and how it determines the transformation behavior of related constructions like Cauchy quotients.
This framework extends the classical concept of homogeneity to a wider class of functional symmetries, providing a unified approach to analyzing scaling properties in various mathematical contexts.
\end{abstract}

\maketitle

\QTP{Body Math}
$\bigskip $\footnotetext{\textit{2010 Mathematics Subject Classification. }%
Primary: 33B15, 26B25, 39B22.
\par
\textit{Keywords and phrases:}  Homogeneity, Additivity, Cauchy Functional Equation.}

\section{Introduction}
Let $T \subset (0,+\infty)$ be a set which is closed with respect to multiplication and $I \subset \R$ an interval.
A function $f: I \to \R$ is called homogeneous, 
if there is a function $m: T \to \R$ such that
\begin{equation*}
f(tx)=m(t)f(x)
\end{equation*}
holds for all $x\in I$ and $t \in T$.
We take this definition as a starting point to investigate a more general notion, namely

\begin{definition}
Let $I \subset \R$ be a non-empty interval and $T \subset \R$ a semigroup with respect to multiplication such that $T I \subset I $.
A function $f: I \to \R$ is called homogeneous with respect to the function $\mh: I \times T \to \R$, if
\begin{equation*}
f(tx)=\mh(x,t) f(x), \qquad x \in I, t \in T.
\end{equation*}
In this case we call $\mh$ \term{homogeneity function} of $f$. %
If $T=I$, since multiplication of real numbers is commutative, 
we have
\begin{equation*}
\textsc{m}(x,t) f(x) = \textsc{m}(t,x) f(t), \qquad x \in I, t \in T,
\end{equation*}
and thus
\begin{equation*}
\textsc{m}(x,t)  = \frac{f(t)}{f(x)} \textsc{m}(t,x)  , \qquad x \in I, t \in T.
\end{equation*}
\end{definition}

Obviously, the homogeneity function $\textsc{m}$ of a  function $f: I \to \R$ is given by
\begin{equation*}
\textsc{m}(x,t)=\frac{f(tx)}{f(x)}, \qquad x \in I, t \in T,
\end{equation*} 
whenever $f(x) \neq 0$; moreover, the homogeneity function is undetermined at the zeros of $f$.
%
Similarly, neglecting for a moment domain issues, a function $f$ is called translative, if
there is a function $a$ such that
\begin{equation*}
f(x+t)=a(t)+f(x),
\end{equation*}
for all $x$ and $t$,
motivating us to introduce translativity functions in the following

\begin{definition}
Let $I \subset \R$ be a non-empty interval and $T \subset \R$ a semigroup with respect to addition such that $I+T  \subset I$.
A function $f: I \to \R$ is called translative, or \textit{additively homogeneous}, 
with respect to the function $\textsc{a}: I \times T  \to \R$, if
\begin{equation*}
f(x+t)=\textsc{a}(x,t)+ f(x), \qquad x \in I, t \in T.
\end{equation*}
In this case we call $\textsc{a} =\textsc{a}_f$ translativity function (or, more systematically, an additive homogeneity function) of $f$.
\end{definition}

Two other notions, which seem very natural from the perspective of functional equations,
are exponential and logarithmic homogeneity.

\begin{definition}
Let $I \subset \R$ be a non-empty interval and $T \subset \R$ be a semigroup with respect to addition.
A function $f: I \to \R$ is called \textit{exponentially homogeneous} with respect to the function $\eh: I \times T \to \R$, if
\begin{equation*}
f(x+t)=\eh(x,t) f(x), \qquad x \in I, t \in T.
\end{equation*}
In this case we call $\eh$ (or $\eh_f$ to emphasize the dependency on the function $f$) an exponential homogeneity function of $f$.
\end{definition}

\begin{definition}
Let $I \subset \R$ be a non-empty interval and $T \subset \R$ be a semigroup with respect to multiplication.
A function $f: I \to \R$ is called \textit{logarithmically homogeneous} with respect to the function $\textsc{l}: I \times T \to \R$, if
\begin{equation*}
f(tx)=\textsc{l}(x,t) + f(x), \qquad x \in I, t \in T.
\end{equation*}
In this case we call $\textsc{l}$ a logarithmic homogeneity function of $f$.
\end{definition}

\begin{remark}
When clear from the context, we sometimes suppress 
the dependency on $f$ in the homogeneity function; otherwise we write
\begin{align*}
\textsc{a}_{f} (x,t)=f(x+t) - f(x)
\end{align*}	
for the \textit{additive homogeneity function},

\begin{align*}
	\textsc{m}_{f} (x,t)=\frac{f(tx)}{f(x)}  
\end{align*}	
for the \textit{multiplicative homogeneity function},
\begin{align*}
	\eh_{f} (x,t)=\frac{f(x+t)}{f(x)}  
\end{align*}	
for the \textit{exponential homogeneity function}, and

\begin{align*}
	\textsc{l}_{f} (x,t)=f(tx)-f(x)  	
\end{align*}	
for the \textit{logarithmic homogeneity function}.
Note that for multiplicative and exponential homogeneity functions some care has to be taken due to the fact that $f$ appears in the denominator.
Moreover, for multiplicative and logarithmic homogeneity functions we find it more natural to write $tx$ instead of $xt$, which of course does not matter, as long we are in domains of real numbers, where multiplication is commutative. 
Without mentioning explicitly we assume
everything to be well-defined, 
which implies suitable relations between the intervals $I$ and $T$ to hold (cf. Definition 1 to 4).
The input function $f$ of the respective homogeneity functions is also referred to as \term{generator}.
\end{remark}

\subsection{Dual Homogeneity Function}
It is natural to introduce also
four other types of homogeneity functions,
each of them complementing the just introduced notion of homogeneity.
In a nutshell, these four more notions of homogeneity functions 
come to pass when subtraction (division) is replaced by addition (multiplication).
These dual notions of homogeneity not only make sense 
from a formal point of view, but also are of practical use
when investigating algebraic properties of homogeneity
in Section \ref{AlgProp}.

\begin{remark}
Under reasonable domain assumptions, we call
\begin{align*}
\bar{\ah}_{f} (x,t)=f(x+t) + f(x)
\end{align*}	
 the \textit{additive dual homogeneity function},

\begin{align*}
	\bar{\mh}_{f} (x,t)={f(tx)}{f(x)}  
\end{align*}	
 the \textit{multiplicative dual homogeneity function},
\begin{align*}
	\bar{\eh}_{f} (x,t)={f(x+t)}{f(x)}  
\end{align*}	
 the \textit{exponential dual homogeneity function}, and

\begin{align*}
	\bar{\lh}_{f} (x,t)=f(tx) + f(x)  	
\end{align*}	
the \textit{logarithmic dual homogeneity function}.

\end{remark}

\subsection{Relation between Types of Homogeneity}
Under suitable assumptions on the function domain these four notions of homogeneity are equivalent.
Let $I = (0,+\infty)$ and $f: I \to (0,+\infty)$ a function and $\textsc{m}:I \times T \to (0,+\infty)$,
$\textsc{m}(x,t)=\frac{f(tx)}{f(x)}$ its (multiplicative) homogeneity function.
Since $I \subset (0,+\infty)$, for every $x \in I$ there is a $u \in \R$ with $e^u=x$.
Similarly,  if $T \subset (0,+\infty)$,  for every $t \in T$ there is $s \in \R$ with $e^s=t$. 
Thus,  
\begin{eqnarray*}
\textsc{m}_f (x,t) &= \textsc{m}_{ f}(e^u,  e^s)\\
&=
\frac{f(e^{u+s})}{f(e^u)}\\
&=
\eh_{f \circ \exp }(u,s).
\end{eqnarray*}
Hence,  $f$ is multiplicatively homogeneous of degree $\textsc{m}$ iff $g:=f \circ \exp :\R \to (0 ,+ \infty)$
is exponentially homogeneous of degree $\eh_g :\R^2 \to (0, +\infty) $ defined by 
$\eh_g (u,s):=\frac{g(u+s)}{g(s)}$. 
In other words,  
the multiplicative homogeneity function of $f$ is the exponential homogeneity function of $g$.
So for positive functions when dealing with positive homogeneity
we may equivalently deal with exponential homogeneity on the whole real numbers .\\

Similarly,  the function $f: I \to \R$ is translative (or, in other words, additively homogeneous) of degree
$\textsc{a}: I \times T \to \R$,  defined by $\textsc{a}(x,t)=f(x+t)-f(x)$,  
with $I, T \subset \R$, if and only if,  
since for every $x \in I$ and for every $t \in T$ there exist $u,s \in (0,+\infty)$ with $x=\log{u}$ and $t=\log{s}$,
the function $g:=f \circ \log : (0,+\infty) \to \R$ is lagrithmically homogeneous of degree
$\textsc{l}:  (0,+\infty)^2 \to \R $ defined by $\textsc{l}(u,s):=\textsc{a}_f (\log{u},  \log{s} )$,
or equivalently, 
iff the function $h:=\exp \circ f \circ \log: (0,+\infty) \to (0,+\infty) $ is multiplicatively homogeneous of degree 
 $\textsc{m}_h :  (0,+\infty)^2 \to (0,+\infty)$.

\subsection{Symmetry of Homogeneity Functions}
In general, the form of homogeneity functions 
excludes symmetry.
Homogeneity functions are symmetric only for constant functions.
For multiplicative and logarithmic  homogeneity functions, respectively, 
the result relies on the commutativity of multiplication;
for additive and exponential homogeneity functions, respectively,
it is based on
commutativity of addition.

\begin{remark}
Assume $\textsc{m}_f(x,t)= \textsc{m}_f(t,x)$ for all $x$ and $t$,
where $f: I \to (0, + \infty)$ and $I \subset \R$ an interval.
By the definition of multiplicative homogeneity functions,
\begin{equation*}
\frac{f(tx)}{f(x)}=
\frac{f(xt)}{f(t)}, \qquad x,t.
\end{equation*}
Since real multiplication is abelian,
we have $f(x)=f(t)$ for all $x$ and $t$.
For $x=t$, this is no restricting condition.
Otherwise, namely if $f(x)=f(t)$ for all $x \neq t$%
\footnote{Recall that in this paper we assume $T$ and $I$ to be real intervals. },
describes the function to be "maximally non-injective",
or, in other words, $f$ is constant.
\end{remark}

The proof for logarithmic homogeneity functions
is very similar.
\begin{remark}
Assume $\textsc{l}_f(x,t)= \textsc{l}_f(t,x)$ for all $x$ and $t$.
By the definition of logarithmic homogeneity functions,
\begin{equation*}
{f(tx)}-{f(x)}=
{f(xt)}-{f(t)}.
\end{equation*}
we have $f(x)=f(t)$ for all $x$ and $t$,
implying $f(x)=f(t)$ for all $x \neq t$.
Thus, $f$ is constant.
\end{remark}

The proof for the other two types of homogeneity functions
relies heavily on the commutativity of addition.
\begin{remark}
Assume $\textsc{a}_f(x,t)= \textsc{a}_f(t,x)$ for all $x$ and $t$.
Thus,
\begin{equation*}
{f(x+t)}-{f(x)}=
{f(t+x)}-{f(t)}, \qquad x,t.
\end{equation*}
\begin{center}
$\Leftrightarrow$
\end{center}
\begin{equation*}
{f(x)}={f(t)}, \qquad x,t.
\end{equation*}
\begin{center}
$\Leftrightarrow$
\end{center}

\begin{center}
$f$ is constant.
\end{center}
\end{remark}
The proof for exponential homogeneity functions is left to the reader.

\subsection{Algebraic Properties of Homogeneity Functionals}
\label{AlgProp}
Here we state some algebraic properties such as
linearity or multiplicativity
of homogeneity functions when considered as functionals.

\subsubsection{Additive Homogeneity}
\begin{remark}[Linearity of $\ah_{\cdot}(x,t)$]
The additive homogeneity functional is linear, i.e., it is
\begin{enumerate}
    \item 
{additive}:

\begin{align*}
\ah_{f+g}(x,t) &=
({f+g})(x+t)-{(f+g)(x)}\\
&=
f(x+t)+g(x+t)-f(x)-g(x) \\
&=
f(x+t)-f(x) + g(x+t)-g(x)\\
&=
\ah_{f}(x,t) + \ah_{g}(x,t), \qquad x,t;
\end{align*}

   \item 
homogeneous:
\begin{align*}
\ah_{\lambda f}(x,t) &=
({\lambda f})(x+t)-{(\lambda f)(x)}\\
&=
\lambda f(x+t)-\lambda f(x) \\
&=
\lambda \left(f(x+t)- f(x) \right) \\
&=
\lambda \cdot \ah_{f}(x,t), \qquad x,t;
\end{align*}
\end{enumerate}
in general,
$a_{\cdot}(x,t)$ is not multiplicative:
\begin{align*}
\ah_{fg}(x,t) &=
({fg})(x+t)-{(fg)(x)}\\
&=
f(x+t) g(x+t)-f(x) g(x) \\
&\neq 
\ah_{f}(x,t) \cdot \ah_{g}(x,t)\\
&=
\left( {f(x+t)-f(x)} \right) \left( {g(x+t)-g(x)} \right), \qquad x,t;
\end{align*}
\end{remark}

Since the additive homogeneity functional is not multiplicative in general,
it is reasonable to 
\begin{itemize}
    \item 
ask when (i.e. for which functions $f$ and $g$) it is  multiplicative,
 \item 
 investigate $\ah_{fg}(x,t) - \ah_{f}(x,t) \cdot \ah_{g}(x,t)$
 (or  
 $\frac{\ah_{fg}(x,t)}{\ah_{f}(x,t) \cdot \ah_{g}(x,t)}$)
 measuring the deviation of the additive homogeneity functional from being multiplicative,

\end{itemize}

 For given $f,g$, it may also be fruitful to address 
 whether the homogeneity functional is multiplicative on a smaller set
 (\term{conditional multiplicativity}).
 

A somewhat more suited notion for the additive homogeneity functional is the concept of dual multiplicativity.

\begin{remark}[Dual Multiplicativity of $\ah_{\cdot}(x,t)$]
The additive homogeneity functional is dual multiplicative
if, and only if, the functions are proportional, i.e.,
if there is $c \in \R$ such that $f= c g$.
\end{remark}

\begin{proof}
\begin{align*}
\ah_{fg}(x,t) &=
({fg})(x+t)-{(fg)(x)}\\
&=
f(x+t) g(x+t)-f(x)g(x) \\
&=
\underbrace{ (f(x+t) - f(x)) }_{ = \ah_f(x,t) } 
\underbrace{ (g(x+t) + g(x))}_{ = \bar{\ah}_g(x,t)} - \underbrace{( f(x+t) g(x) - f(x) g(x+t)}_{=:R_{f,g} (x,t)}, \qquad x,t;
\end{align*} 
If $R_{f,g} (x,t)$ vanishes, i.e., if $f(x+t) g(x) = f(x) g(x+t)$, 
or equivalently,
\begin{align*}
\frac{f(x+t)}{g(x+t)}= \frac{f(x)}{g(x)}, \qquad x,t,
\end{align*}
the additive homogeneity functional is dual multiplicative.
Setting $t:=-x+y$ and $h:=\frac{f}{g}$, we get that the dual multiplicativity of the additive homogeneity functional is equivalent to 
$h$ being constant, which means $f=c g$ for some $c \in \R$ as claimed.
The converse is easy to verify.
\end{proof}

\subsubsection{Exponential Homogeneity}
\begin{remark}[Multiplicativity of $\eh_{\cdot}(x,t)$]
The exponential homogeneity functional is multiplicative, and thus homogeneous, but not additive.
\end{remark}

\begin{proof}
\begin{align*}
\eh_{fg}(x,t) &=
\frac{({fg})(x+t)}{{(fg)(x)}}\\
&=
\frac{ f(x+t) g(x+t)}{{f(x) g(x)}}\\
&=
\eh_{f}(x,t) \cdot \eh_{g}(x,t), \qquad x,t;
\end{align*}
\end{proof}

Since in general $\eh_{\cdot}(x,t)$ is not additive,
it is reasonable to take a closer look at
\begin{align*}
\eh_{f+g}(x,t) -\eh_{f}(x,t) - \eh_{g}(x,t) 
&=
- \frac{f(x+t) g(x)}{(f+g)(x) f(x)}
- \frac{g(x+t) f(x)}{(f+g)(x) g(x)}, \qquad x,t;
\end{align*}

\begin{remark}[Additivity of $\eh_{\cdot}(x,t)$]
The exponential homogeneity functional is additive only if $f \equiv 0$ or $f=-g$.
\end{remark}

\begin{proof}
Assuming $\eh_{f+g}(x,t) = \eh_{f}(x,t)+ \eh_{g}(x,t)$, setting $t=0$, we have
\begin{align*}
\left(\frac{f(x)}{g(x)}\right)^2 = -\left(\frac{f(x)}{g(x)}\right), \qquad x,t;
\end{align*}
Thus, the function $h:=\frac{f}{g}$
satisfies
\begin{align*}
h(x) \left( h(x)+1 \right)=0, \qquad x,t,
\end{align*}
which gives the claim.
\end{proof}

\subsubsection{Multiplicative Homogeneity}
\begin{remark}[Linearity of $\mh_{\cdot}(x,t)$]
Let $I,T \subset \R$ be intervals such that $T\cdot I \subset I$ and $1 \in T$.
The multiplicative homogeneity functional $\mh_{\cdot}(x,t)$  is
\begin{enumerate}
    \item 
additive only for zero functions or additive inverses of each other,
i.e., $f\equiv 0$ or $f=-g$;

   \item 
homogeneous of order $0$;

 \item
 never logarithmic;
  \item 
exponential if 
\[
g(x) = \frac{a f(x)}{f(x) - a}.
\]
for some non-zero $a \in \R$ such that $f(x) \neq a$ for all $x$ in the domain.
\end{enumerate}
\end{remark}

\begin{proof}
    
\begin{enumerate}
    \item 
Assuming additivity of $\mh_{\cdot}(x,t)$, we have, for all $x,t$,
\begin{align*}
\mh_{f+g}(x,t) &=
\frac{({f+g})(tx)}{{(f+g)(x)}}\\
&=
\frac{{f(tx)+g(tx)}}{{f(x)+g(x)}}\\
&=
\mh_{f}(x,t) + \mh_{g}(x,t)\\
&{=}
\frac{{f(tx)}}{{f(x)}} +\frac{{g(tx)}}{{g(x)}};
\end{align*}
Thus,
\begin{align*}
\frac{{f(tx)}}{{g(tx)}}
&=
-\left(\frac{{f(x)}}{{g(x)}}\right)^2, \qquad x,t.
\end{align*}
Setting $t=1$ we get
\begin{align*}
\left(\frac{f(x)}{g(x)}\right)^2 = -\left(\frac{f(x)}{g(x)}\right), \qquad x;
\end{align*}

Put $h:=\frac{{f}}{{g}}$; thus $h(x)=-(h(x))^2$,
or equivalently, $h(x) (h(x)+1)=0$ for all $x$.

By the definition of $h$, this means that $f \equiv 0$ (together with $g \neq 0$ arbitrary), or $f=-g$ as claimed.
   \item 
Obviously, for all $x,t$ and $\lambda \neq 0$,
\begin{align*}
\mh_{\lambda f}(x,t)
&=
\frac{\lambda{f(tx)}}{\lambda {f(x)}} \\
&=\mh_{f}(x,t),
\end{align*}
which means that
$\mh_{\cdot }(x,t)$ is homogeneous of order $0$,
thus homogeneous of order $1$ only for $\lambda=1$.

To see the latter, assume that $\mh$ is homogeneous of degree $p$ and $q$ with $p \neq q$: 
\begin{align*}
\mh_{ \lambda f}&=\lambda^p f\mh_{f}, \\
\mh_{ \lambda f}&=\lambda^q \mh_{f}.
\end{align*}
for $\lambda \in T$.
Thus,
\begin{align*}
\lambda^p \mh_{f}=\lambda^q \mh_{f}.
\end{align*}
Since $\mh_{f} \neq 0$, we obtain
$\lambda^{p-q} =1$,
which means that 
$\lambda$ is a $(p-q)$-th root of unity.

For $p=0$
we have $\lambda^{-q} =1$.
If $\lambda$ is assumed to be real, it follows
$\lambda =1$ as claimed.

 \item
 Assuming $\mh_{fg}(x,t) = \mh_{f}(x,t)+ \mh_{g}(x,t)$
 for all $x, t$,
 gives us
 \begin{align*}
\frac{f(tx) g(tx)}{f(x) g(x)}
&=
\frac{g(tx)}{g(x)}+ \frac{f(tx)}{f(x)}\\
 &=
 \frac{ g(tx)f(x) +  f(tx)g(x)}{g(x) f(x)}.
\end{align*}
Dividing both sides by the left-hand side yields
 \begin{align*}
 \frac{ g(tx)f(x) +  f(tx)g(x)}{f(tx) g(tx)} 
 &=
 \frac{f(x)}{f(tx)} +
 \frac{g(x)}{g(tx)}\\
&=
1,
\qquad x,t;
\end{align*}
Setting here $t=1$ gives $1+1=1$, a contradiction.
  \item 
For the multiplicative homogeneity functional to be exponential,
assume \( f \) and \( g \) such that
\begin{equation} \label{eq:main}
\eh_{f+g}(x, t) = \eh_f(x, t) \cdot \eh_g(x, t).
\end{equation}
Thus,
\[
\frac{f(tx) + g(tx)}{f(x) + g(x)} = \frac{f(tx)}{f(x)} \cdot \frac{g(tx)}{g(x)}.
\]
Multiplying both sides by \( f(x) + g(x) \) and \( f(x) g(x) \), we get
\[
[f(tx) + g(tx)] f(x) g(x) = f(tx) g(tx) [f(x) + g(x)].
\]
Rearranging gives us
\[
f(x) g(x) [f(tx) + g(tx)] = f(tx) g(tx) [f(x) + g(x)].
\]
Dividing by \( f(x) g(x) f(tx) g(tx) \) (assuming nonzero values):
\[
\frac{1}{f(tx)} + \frac{1}{g(tx)} = \frac{1}{f(x)} + \frac{1}{g(x)}.
\]
Thus, \( x \mapsto \frac{1}{f(x)} + \frac{1}{g(x)} \) is $0$-homogeneous 
(scale-invariant), hence constant:
\[
\frac{1}{f(x)} + \frac{1}{g(x)} = c.
\]
Solving for \( g(x) \) gives us
\[
g(x) = \frac{1}{c - \frac{1}{f(x)}} = \frac{f(x)}{c f(x) - 1}.
\]
under the assumption that $f(x) \neq \frac{1}{c}$ for all $x \in I$.
The case $c=0$ gives us $g(x)=-f(x)$ (cf. part (i) of this remark).
Without loss of generality, assume now $c \neq 0$.

Let \( c = \frac{1}{a} \neq 0 \) (and thus $a \neq 0$) to obtain
\[
g(x) = \frac{a f(x)}{f(x) - a}.
\]
with $\quad f(x) \neq a$ for all $x \in I$.

Conversely, if \( g(x) = \frac{a f(x)}{f(x) - a} \), then
\[
f(x) + g(x) = \frac{(f(x))^2}{f(x) - a}, \quad
f(tx) + g(tx) = \frac{f(tx)^2}{f(tx) - a},
\]
so
\[
\eh_{f+g}(x, t) = \frac{f(tx)^2 (f(x) - a)}{f(x)^2 (f(tx) - a)}.
\]
Also,
\[
\eh_f(x, t) = \frac{f(tx)}{f(x)}, \quad
\eh_g(x, t) = \frac{g(tx)}{g(x)} = \frac{f(tx) (f(x) - a)}{f(x) (f(tx) - a)},
\]
and their product is
\[
\eh_f(x, t) \cdot \eh_g(x, t) = \frac{f(tx)^2 (f(x) - a)}{f(x)^2 (f(tx) - a)} = \eh_{f+g}(x, t).
\]
This completes the proof.
\end{enumerate}
\end{proof}

\subsubsection{Logarithmic Homogeneity}
\begin{remark}[Linearity of $\lh_{\cdot}(x,t)$]
The logarithmic homogeneity functional $\lh_{\cdot}(x,t)$  is linear, i.e., it is
\begin{enumerate}
    \item 
additive and
 \item 
homogeneous;
\end{enumerate}
\item 
moreover, it is dual multiplicative only if the functions are proportional;  
\end{remark}

\begin{proof}
\begin{enumerate}
    \item 
To prove additivity of $\lh_{\cdot}(x,t)$, observe that
\begin{align*}
\lh_{f+g}(x,t) &= (f+g)(tx)-(f+g)(x)\\
&=
f(tx)+g(tx)-f(x)-g(x)\\
&=f(tx)-f(x)+g(tx)-g(x)\\
&= \lh_{f}(x,t) + \lh_{g}(x,t), \quad x,t.
\end{align*}
 \item 
For the homogeneity of $\lh_{\cdot}(x,t)$, verify that
\begin{align*}
\lh_{\lambda f}(x,t) &= (\lambda f)(tx)-(\lambda f)(x)\\
&=
 \lambda ( f(tx) - f(x)\\
 &=
\lambda \lh_{f}(x,t), \quad x,t.
\end{align*}

\item 
Assuming $\lh_{\cdot}(x,t)$ to be dual multiplicative, we get
\begin{align*}
\lh_{fg}(x,t) &= (fg)(tx)-(fg)(x)\\
&=
 f(tx) g(tx)-f(x) g(x)\\
&=
\underbrace{ (f(tx) - f(x)) }_{ = \lh_f(x,t) } 
\underbrace{ (g(tx) + g(x))}_{ = \dlh_g (x,t)} - \underbrace{( f(tx) g(x) - f(x) g(tx)}_{=:R_{f,g} (x,t)}, \qquad x,t;
\end{align*} 

Thus $R_{f,g} \equiv 0$, hence  $f(tx) g(x) = f(x) g(tx)$, 
or equivalently,
\begin{align*}
\frac{f(tx)}{g(tx)}= \frac{f(x)}{g(x)}, \qquad x,t.
\end{align*}
Setting $t:=\frac{y}{x}$ and $h:=\frac{f}{g}$, we get that the dual multiplicativity of the logarithmic homogeneity functional is equivalent to 
$h$ being constant, which means that there is $c \in \R$ such that $f=c g$.
The converse implication is easy to verify.
\end{enumerate}

\end{proof}
 
\subsubsection{Inversion of Generators}
In terms of inversion the following properties hold true:
\begin{align*}
\eh_{\frac{1}{f}} &=
\frac{1}{\eh_{f}},
\end{align*}
and 
\begin{align*}
\mh_{\frac{1}{f}} &=
\frac{1}{\mh_{f}};
\end{align*}

Analogously, we have for additive inverses of the generator 
for all $x,t$:
\begin{align*}
\ah_{-{f}} &=
-\ah_{{f}},
\end{align*}
and \begin{align*}
\lh_{-{f}}&=
-\lh_{{f}};
\end{align*}

\section{Some examples of homogeneity functions}
In this section we consider homogeneity functions corresponding to some classical generators.
We start with regular solutions to one of the Cauchy functional equations \cite[128-130]{Kuczma2}, namely:
\begin{itemize}
\item 
 additive functions: lines through the origin;
\item 
multiplicative functions: power functions; 
\item 
exponential functions: functions of exponential type;
\item 
 logarithmic functions: scaler multiples of logarithmic functions
\end{itemize}

\subsection{Additive function}
The (multiplicative) homogeneity function of $f:\R \to \R$ defined by $f(x)=cx$, for $c\in\R$, $c\neq 0$, arbitrarily fixed,
is clearly
\begin{align*}
\textsc{m}(x,t)&=\frac{f(tx)}{f(x)}\\
&=\frac{c({tx})}{cx}\\
&=t\\
&= \frac{f(t)}{c}\\
&= \frac{f(t)}{f(1)}, 
\end{align*}
the identity function on $\R \setminus \{0\}$ only depending on $t$.

The additive homogeneity function reads
\begin{align*}
\textsc{a}(x,t)&=f(x+t)-f(x)\\
&={c({x+t})}-{cx}\\
&=ct\\
&= f(t).
\end{align*}

The logarithmic homogeneity function reads
\begin{align*}
\textsc{l}(x,t)&=f(tx)-f(x)\\
&={c({tx})}-{cx}\\
&=cx (t-1)\\
&= f(x) \frac{f(t-1)}{f(1)}\\
&= f(x) \textsc{m} (x,t-1).
\end{align*}

The exponential homogeneity function reads
\begin{align*}
\eh(x,t)&=\frac{f(x+t)}{f(x)}\\
&=\frac{c(x+t)}{cx}\\
&=1 + \frac{t}{x}\\
&= 1 +s \\
&= 1 +\frac{f(t)}{f(x)}
\end{align*}
where $s:=\frac{t}{x}$ as above.

\subsection{Power functions}
The homogeneity function of $f:(0, +\infty) \to (0, +\infty)$ defined by $f(x)=x^p$, for $p\in \R$ arbitrarily fixed,
is clearly
\begin{align*}
\textsc{m}(x,t)&=\frac{f(tx)}{f(x)}\\
&=\frac{(tx)^p}{x^p}=t^p.
\end{align*}
Thus, the homogeneity function of a smooth multiplicative function is a multiplicative function independent of $x$
and of the same degree as $f$.

Its additive homogeneity function reads
\begin{align*}
\textsc{a}(x,t)&=f(x+t)-f(x)\\
&={(x+t)^p}-{x^p}.\\
\end{align*}

The logarithmic homogeneity function reads
\begin{align*}
\textsc{l}(x,t)&=f(tx)-f(x)\\
&={(tx)^p}-{x^p}\\
&=x^p (t^p -1).
\end{align*}

The exponential homogeneity function reads
\begin{align*}
\eh(x,t)&=\frac{f(x+t)}{f(x)}\\
&=\frac{(x+t)^p}{x^p}\\
&=\left(1 + \frac{t}{x}\right)^p.
\end{align*}

Note that, for power functions, $\eh$ is homogeneous of degree zero,
i.e. $\eh(sx,st)=\eh(x,t)$ for all $x,t,s \in (0, +\infty)$. 

Since the identity function is both additive and multiplicative,
 we have the following
\begin{remark}
Let $f: I \to \R $ with $f(x)=x$.
Then:
\begin{align*}
\ah_{id}(x,t) &= {f(x+t)}-{f(x)} \\
&= t, \qquad x,t;\\
\lh_{id}(x,t) &= \frac{f(tx)}{f(x)}\\
&= t, \qquad x,t;\\
\lh_{id}(x,t) &= {f(tx)}-{f(x)}\\
&= x(t-1), \qquad x,t;\\
\eh_{id}(x,t) &= \frac{f(x+t)}{f(x)}  \\
&=1 + \frac{t}{x}, \qquad  x \neq 0, t; 
\end{align*}
\end{remark}

\subsection{Exponential functions}
Let $f: \R \to (0, +\infty)$ with $f(x)=a^x$ for some $a>0$ be a smooth exponential function.
Its multiplicative homogeneity function reads
\begin{align*}
	\textsc{m}(x,t)&=\frac{f(tx)}{f(x)}\\
	&=\frac{a^{tx}}{a^{x}}\\
	&= a^{tx-x}\\
	&= a^{x(t-1)}\\
	&=
	f(x(t-1)).
\end{align*}

The additive homogeneity function reads
\begin{align*}
	\textsc{a}(x,t)&=f(x+t)-f(x)\\
	&=a^{x+t}-{a^{x}}\\
	&= a^{x} \left(a^t-1 \right)\\
	&= f(x) \left( f(t)-1 \right).
\end{align*}

The logarithmic homogeneity function reads
\begin{align*}
	\textsc{l}(x,t)&=f(tx)-f(x)\\
	&={a^{tx}}-{a^{x}}\\
	&=a^x \left(a^{tx-x}-1 \right)\\
	&=f(x) \left(f(x(t-1))-1 \right)\\
	&= f(x) \left(\textsc{m}_f {(x,t)}-1 \right).
\end{align*}

The exponential homogeneity function reads
\begin{align*}
	\eh(x,t)&=\frac{f(x+t)}{f(x)}\\
	&=\frac{a^{x+t}}{a^x}=a^t\\
	&=	f(t).
\end{align*}

\subsection{Logarithmic functions}
The homogeneity function of $f:(0, +\infty) \to \R$ defined by $f(x)=c \log{x}$, for arbitrary $c\in \R$, $c \neq 0$,
is clearly
\begin{align*}
\textsc{m}(x,t)&=\frac{f(tx)}{f(x)}\\
&=\frac{c\log(tx)}{c\log{x}}\\
&= 1+\frac{\log{t}}{\log{x}}\\
&= 1+\frac{f(t)}{f(x)}.
\end{align*}

The additive homogeneity function reads
\begin{align*}
\textsc{a}(x,t)&=f(x+t)-f(x)\\
&={c \log(x+t)}-{c\log{x}}\\
&= c \log{\left(1+\frac{t}{x}\right)}\\
&=  f{\left(1+s\right)},
\end{align*}
where $s:=\frac{t}{x}$.
Here, similarly as for the exponential homogeneity function
in case of power functions,
the additive homogeneity function
$\textsc{a}$ is homogeneous of degree $0$.
 
The logarithmic homogeneity function reads
\begin{align*}
\textsc{l}(x,t)&=f(tx)-f(x)\\
&={c \log(tx)}-{c\log{x}}\\
&=c \log{\left(\frac{tx}{x}\right)}\\
&=c \log{t}\\
&= f(t),
\end{align*}
a logarithmic function, which depends only on the variable $t$. 

The exponential homogeneity function reads
\begin{align*}
\eh(x,t)&=\frac{f(x+t)}{f(x)}\\
&=\frac{c\log{(x+t)}}{c\log{x}}\\
&=\frac{\log{(x+t)}}{\log{x}},
\end{align*}
where $1 \neq x > 0$ and $x>-t$.
If $x=1$ and $t=0$, since formally $\eh(1,0)=\frac{0}{0}$,
we obtain by L'Hospital 
\begin{align*}
\eh(1,0)&=\left. \frac{\frac{1}{{x+t}}}{\frac{1}{{x}}}  \right|_{x=1}^{t=0}
\\
&= \left. \frac{x}{x+t}  \right|_{x=1}^{t=0}
&=1.
\end{align*}


\subsection{Sine function}
The homogeneity function of $f:\R \to (0, +\infty)$ defined by $f(x)=\sin{x}$
is clearly
\begin{align*}
\textsc{m}(x,t)&=\frac{f(tx)}{f(x)}\\
&=\frac{\sin(tx)}{\sin{x}}\\
&= 
\frac{e^{i(tx -\frac{\pi}{2})} + e^{-i(tx -\frac{\pi}{2})}}{e^{i(x -\frac{\pi}{2})}
+ e^{-i(x -\frac{\pi}{2})}}.
\end{align*}

The additive homogeneity function reads
\begin{align*}
\textsc{a}(x,t)&=f(x+t)-f(x)\\
&={\sin(x+t)}-{\sin{x}}\\
&=\sin{x} \cos{t}+\sin{t}\cos{x}-\sin{x}\\
&= \sin{x} (\cos{t}-1)+ \sin{t} \cos{x} \\
&= 2 \sin{\frac{t}{2}} \cos{\left(\frac{t}{2}+x\right)}.
\end{align*}

Use the sum-difference-formula
\begin{align*}
\sin{A}-\sin{B}
&=
2\cos{\left(\frac{A+B}{2}\right)}
\sin{\left(\frac{A-B}{2}\right)} 
\end{align*}
and $\cos(x-\frac{\pi}{2})=\sin(x)$ to verify the last step.


The logarithmic homogeneity function reads
\begin{align*}
\textsc{l}(x,t)&=f(tx)-f(x)\\
&=\sin(tx)-\sin{x}\\
&= -2 \sin\left(\frac{x}{2}(1-t)\right) \cos\left(\frac{x}{2}(1+t)\right).
\end{align*}

The exponential homogeneity function reads
\begin{align*}
\eh(x,t)&=\frac{f(x+t)}{f(x)}\\
&=\frac{\sin(x+t)}{\sin{x}}\\
&=\frac{\sin{x}\cos{t} + \sin{t} \cos{x}}{\sin{x}}\\
&= \cos{t}+ \sin{t} \cot{x}.
\end{align*}

\subsection{Cosine function}
The homogeneity function of $f:\R \to \R$ defined by $f(x)=\cos{x}$
is clearly
\begin{align*}
\textsc{m}(x,t)&=\frac{f(tx)}{f(x)}\\
&=\frac{\cos(tx)}{\cos{x}}\\
&=\frac{e^{i tx} + e^{-i tx}}{e^{ix}+ e^{-i x}}.
\end{align*}

For integer values of $t$, 
we can express $cos(tx)$ using Chebyshev polynomials of the first kind \cite{LiuDas2025}, denoted by $T_t$.
These polynomials are defined by $T_t (x) = \cos(t \arccos(x) )$ for $t \in \N_0$, 
and satisfy
\begin{align*}
\cos(tx) = T_t (x),
\end{align*}
where $t$ is the degree of the polynomial, 
Alternatively,  $T_t$ can be introduced
by the recurrence relation
\begin{equation*}
T_t(x) 
= 2x T_{t-1}(x)
-
 T_{t-2}(x) (x)
\end{equation*}
with the two base cases $T_0 (x)=1$ and $T_1 (x)=x$.

The additive homogeneity function reads
\begin{align*}
\textsc{a}(x,t)&=f(x+t)-f(x)\\
&={\cos(x+t)}-{\cos{x}}\\
&=\cos{x} \cos{t}-\sin{t}\sin{x}-\cos{x}\\
&= \cos{x} (\cos{t}-1)- \sin{x} \sin{t}\\
&= -2 \sin{\left(x+\frac{t}{2}\right)} \sin{\frac{t}{2}}.
\end{align*}

The logarithmic homogeneity function reads
\begin{align*}
\textsc{l}(x,t)&=f(tx)-f(x)\\
&=
\cos{(tx)}-\cos{x}\\
&=
-2 \sin\left(\frac{x (t-1)}{2}\right) 
\sin\left(\frac{x (t+1)}{2}\right).
\end{align*}

To obtain the final form of the additive and logarithmic, respectively, homogeneity function, use the sum-difference-formula
\begin{equation*}
\cos{A}-\cos{B}
=
-2\sin{\left(\frac{A+B}{2}\right)}
\sin{\left(\frac{A-B}{2}\right)}. 
\end{equation*}

The exponential homogeneity function reads
\begin{align*}
\eh(x,t)&=\frac{f(x+t)}{f(x)}\\
&=\frac{\cos(x+t)}{\cos{x}}\\
&=\frac{\cos{x}\cos{t} - \sin{x} \sin{t}}{\cos{x}}\\
&= \cos{t}- \tan{x} \sin{t} \\
&=\frac{e^{i t} + e^{- i (2x+t)}}{1 + e^{- 2i x }}.\\
\end{align*}

\section{Homogeneity and the Harmonic Mean}
The concept of homogeneity is not limited to single variable functions,
but works as well for functions of several variables.
To illustrate this, we analyze the homogeneity properties of the harmonic mean
$H: (0,+\infty)^2 \to (0,+\infty)$ given by
\begin{align*}
H(x,y) = \frac{2xy}{x+y}.
\end{align*}

\subsection{Multiplicative Homogeneity}
The harmonic mean is \emph{positively homogeneous of degree 1}, meaning its multiplicative homogeneity function is simply:
\begin{align*}
\textsc{m}_H(x,y,t) := \frac{H(tx,ty)}{H(x,y)} = t,
\end{align*}
which is the identity function on $(0, +\infty)$. 
This shows perfect scalability under uniform rescaling of the variables.

\subsection{Additive homogeneity}
The additive homogeneity function 
reveals how $H$ changes under translation

\begin{align*}
\textsc{a}_H(x,y,t) &:= H(x+t,y+t) - H(x,y) \\
&= \frac{t[(x-y)^2 + t(x+y)]}{(x+y+2t)(x+y)}.
\end{align*}
This shows that the translation effect depends on both the scale $t$ and the initial disparity between $x$ and $y$.

\subsection{Exponential Homogeneity}
The exponential homogeneity function, 
defined by
$\eh_H (x,y,t):=\frac{H(x+t, y+t)}{H(x,y)}$,
measures the relative change under translation,
reads here
\begin{align*}
	\eh_H (x,y,t)=\frac{(x+t)(y+t)(x+y)}{xy(x+y+2t)}.
\end{align*}

\subsection{Logarithmic Homogeneity}
For homogeneous means, the logarithmic homogeneity function simplifies to
\begin{align*}
\textsc{l}_H(x,y,t) &:= H(tx,ty) - H(x,y) \\
&= (t-1)H(x,y)\\
&= \frac{2(t-1)xy}{x+y}.
\end{align*}

By the relation between the harmonic, arithmetic and geometric mean, 
namely, for all $x,y >0$,
\begin{align*}
H(x,y) &= \frac{2 xy}{x+y}\\
&= \frac{G^2(x,y)}{A(x,y)},
\end{align*}
where
$H, G: (0, + \infty)^2 \to (0, + \infty)$ 
with $G(x,y):=\sqrt{xy}$ and $A:\R^2 \to \R$ with $A(x,y):=\frac{x+y}{2}$,
and the fact that the arithmetic mean is both translative and homogeneous, 
i.e.,
\begin{align*}
A(x+t, y+t) &= t+ A(x,y), \quad x,y,t \in \R,
\end{align*}
and 
\begin{align*}
A(tx, ty) &= t A(x,y), \quad x,y,t \in \R,
\end{align*}
the homogeneity functions of $H$ can be expressed in terms of the corresponding ones of $G^2$ and $A$.
Using this, we obtain
\begin{align*}
 \ah_{G^2}(x,y,t) = t(x+y)+t^2, \quad x,y,t >0,
\end{align*}
and thus
\begin{align*}
 \ah_{H} (x,y,t) = \frac{t(x^2+y^2) +t^2 (x+y)}{(x+y+2t) (x+y)}, \quad x,y,t >0.
\end{align*}

\begin{itemize}
\item \textbf{Scaling}: Multiplicative homogeneity shows $H$ scales linearly with its arguments
\item \textbf{Translation}: Additive homogeneity reveals the change depends on both the translation magnitude $t$ and initial disparity $(x-y)^2$
\item \textbf{Symmetry}: All forms respect the symmetry $H(x,y) = H(y,x)$
\item \textbf{Special Case}: When $x=y$, by reflexivity of means, all homogeneity functions simplify significantly:
\begin{align*}
\textsc{a}_H(x,x,t) &= t \\
\eh_H(x,x,t) &= 1 + \frac{t}{x} \\
\textsc{l}_H(x,x,t) &= (t-1)x
\end{align*}
\end{itemize}

\section{Homogeneity and Means}
Let $I \subset \R$ be a non-void interval. 
A function $M: I^2 \to I$ such that
\begin{align}\label{eq:internality}
\min(x,y) \leq M(x,y) \leq \max(x,y)
\end{align}
for all $x,y \in I$ is called a mean in $I$, and property \eqref{eq:internality}
is called \term{internality} (cf. \cite{Matkowski2003}, \cite{Matkowski2012}).
In this section we apply the idea of generalized homogeneity
to means.
As a prestep, recall that minimum and maximum can be expressed in terms of the arithmetic mean and the absolute value, namely for all $x,y \in I$
\begin{align*}
\min(x,y)&=\frac{x+y - |x-y|}{2}\\
\max(x,y)&= \frac{x+y + |x-y| }{2}.
\end{align*}
This helps to deal with the homogeneity functions of $\min$ and $\max$ more explicitly.
We obtain 
\begin{align*}
\ah_{\min} (x,y,t) &= \min (x+t, y+t) - \min (x,y) \\
                 &= t;\\
\eh_{\min} (x,y,t) &=  \frac{\min (x+t, y+t)}{\min (x,y)} \\
&=\frac{x+y+2t-|x-y|}{x+y-|x-y|};\\
\lh_{\min} (x,y,t) &=  {\min (tx, ty)} - {\min (x,y)} \\
&=  \frac{1}{2}\left( x(t-1) + y(t-1)- (|t|-1) |x-y|\right);\\  
\mh_{\min} (x,y,t) &=  \frac{\min (tx, ty)}{\min (x,y)}  \\
&=  \frac{t (x+y) - |t| |x-y| }{x+y-|x-y|} \\
&= 
\begin{cases}
t & t \geq 0,\\
t \frac{\max(x,y)}{\min(x,y)}, \qquad t<0,
\end{cases} 
\end{align*}
and
\begin{align*}
\ah_{\max} (x,y,t) &= \max (x+t, y+t) - \max (x,y) \\
                 &= t;\\
\eh_{\max} (x,y,t) &=  \frac{\max (x+t, y+t)}{\max (x,y)} \\
&=\frac{x+y+2t+|x-y|}{x+y+|x-y|};\\
\lh_{\max} (x,y,t) &=  {\max (tx, ty)} - {\max (x,y)} \\
&=  \frac{1}{2}\left( x(t-1) + y(t-1)+ (|t|-1) |x-y|\right);\\  
\mh_{\max} (x,y,t) &=  \frac{\max (tx, ty)}{\max (x,y)}  \\
&=  \frac{t (x+y) + |t| |x-y| }{x+y-|x-y|}   \\
&= 
\begin{cases}
t & t \geq 0,\\
t \frac{\min(x,y)}{\max(x,y)},\qquad  t<0
\end{cases}
\end{align*}

Let us assume $M$ to be a positive mean. 
Replacing $x$ by $tx$ and $y$ by $ty$ in \eqref{eq:internality}, we obtain
\begin{align*}
\min(tx,ty) \leq M(tx,ty) \leq \max(tx,ty),
\end{align*}
and thus, by the notion of multiplicative homogeneity functions,
\begin{align*}
\frac{\mh_{\min} (x,y,t)}{\min(x,y)} \leq\frac{\mh_{M} (x,y,t)}{M(x,y)}
\leq \frac{\mh_{\max} (x,y,t)}{\max(x,y)}.
\end{align*}

\section{Homogeneity functions of only one variable}

We wonder when a homogeneity function is independent of $x$ or $t$.
A differentiable function is independent of a variable if its partial derivative with respect to this variable vanishes.
We obtain the following

\begin{theorem}[Multiplicative Homogeneity]
Let \( I \subset (0, +\infty) \) be an interval and \( f:I \to (0, +\infty) \) a differentiable function. 
Define the \emph{multiplicative homogeneity function} of \( f \) as:
\[
\mh_f(x,t) = \frac{f(tx)}{f(x)}, \quad x \in I, t > 0.
\]
Then:
\begin{enumerate}
    \item \( \mh_f \) is independent of \( x \) \textbf{if, and only if,} \( f \) is a power law:
    \[
    f(x) = Cx^k \quad \text{for constants } C > 0, k \in \mathbb{R}.
    \]
    \item \( \mh_f \) is independent of \( t \) \textbf{if, and only if,} \( f \) is constant.
\end{enumerate}
\end{theorem}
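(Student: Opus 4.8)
The plan is to dispatch the two ``if'' directions by direct substitution and the two ``only if'' directions by differentiating $\mh_f$ with respect to the relevant variable, setting the result to zero, and integrating the resulting separable relation. For the easy directions: if $f(x)=Cx^{k}$ then $\mh_f(x,t)=\frac{C(tx)^{k}}{Cx^{k}}=t^{k}$, which does not depend on $x$; and if $f$ is constant then $\mh_f\equiv 1$, which depends on neither variable. So the converse implications in both parts are immediate.

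For part~(1), ``only if'': the domain $\{(x,t):x\in I,\ tx\in I\}$ of $\mh_f$ is a connected planar region, so since $f$ is differentiable, independence of $\mh_f$ from $x$ is equivalent to $\partial_x\mh_f\equiv 0$. Computing with the quotient and chain rules,
\[
\frac{\partial}{\partial x}\,\frac{f(tx)}{f(x)}=\frac{t f'(tx)f(x)-f(tx)f'(x)}{f(x)^{2}}=0,
\]
hence $t\,\dfrac{f'(tx)}{f(tx)}=\dfrac{f'(x)}{f(x)}$ whenever $x,tx\in I$ (we may divide, as $f>0$). Fix $x_{0}\in I$, set $k:=x_{0}f'(x_{0})/f(x_{0})$, and for arbitrary $y\in I$ take $t=y/x_{0}$ (admissible since $tx_{0}=y\in I$) to obtain $\dfrac{f'(y)}{f(y)}=\dfrac{k}{y}$ for all $y\in I$. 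Thus $(\log f)'(y)=k/y$ on the interval $I$, and integrating gives $\log f(y)=k\log y+c$, i.e.\ $f(y)=Cy^{k}$ with $C=e^{c}>0$. Equivalently, one may substitute $g:=\log f$ and $G(u):=g(e^{u})$, rewrite the hypothesis as a Cauchy-type equation $G(u+v)=G(u)+\Psi(v)$ on an interval, and invoke the known form of its differentiable solutions \cite[128--130]{Kuczma2}.

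For part~(2), ``only if'': differentiating in $t$ gives $\partial_t\mh_f(x,t)=\dfrac{x f'(tx)}{f(x)}=0$, and since $x\in I\subset(0,+\infty)$ is nonzero this forces $f'(tx)=0$; as the product $tx$ realizes every point of $I$ (take $t=1$), we get $f'\equiv 0$ on the interval $I$, so $f$ is constant. (Even without differentiability: putting $t=1$ in $f(tx)=\mh_f(x,t)f(x)$ and using $f>0$ forces $\mh_f(x,1)=1$, so independence from $t$ already means $\mh_f\equiv 1$, i.e.\ $f(tx)=f(x)$ on the domain, whence $f$ is constant by connectedness of $I$.)

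The main obstacle is not any single computation but the domain bookkeeping: because $I$ need not be all of $(0,+\infty)$, one must verify that the substitution $t=y/x_{0}$ stays in the domain of $\mh_f$, that $\{(x,t):x,tx\in I\}$ is connected so that a vanishing partial derivative genuinely encodes independence of that variable, and that a vanishing logarithmic derivative on the interval $I$ integrates to a single global power law rather than a locally defined one. Each of these is routine, but they are the only places where the argument could slip.
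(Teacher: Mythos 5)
Your proof is correct and follows essentially the same route as the paper's: compute $\partial_x \mh_f$ and $\partial_t \mh_f$, set them to zero, reduce to $t\,f'(tx)/f(tx)=f'(x)/f(x)$, and integrate the logarithmic derivative $k/x$ to obtain the power law. In fact your derivation that $xf'(x)/f(x)$ is constant --- fixing $x_0$ and substituting $t=y/x_0$ --- is tighter than the paper's corresponding step (which essentially assumes the constancy it is trying to establish), and your attention to the domain bookkeeping is a welcome addition rather than a deviation.
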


\begin{proof}
\begin{enumerate}
    \item \textbf{(Independence of \( x \))}:
    (\(\Rightarrow\)) Suppose \( \mh_f \) is independent of \( x \). Then:
    \[
    \frac{\partial}{\partial x} \left( \frac{f(tx)}{f(x)} \right) = \frac{t f'(tx) f(x) - f(tx) f'(x)}{f(x)^2} = 0.
    \]
    This implies the key identity:
    \[
    t \frac{f'(tx)}{f(tx)} = \frac{f'(x)}{f(x)}. \quad (*)
    \]
    Let \( k := \frac{x f'(x)}{f(x)} \) (logarithmic derivative). Then:
    \[
    \text{From } (*): \quad t \cdot \frac{k}{tx} = \frac{k}{x} \implies \frac{k}{x} = \frac{k}{x},
    \]
    which holds for all \( k \). Thus, \( k \) must be constant. Integrating \( \frac{f'(x)}{f(x)} = \frac{k}{x} \) gives:
    \[
    \ln f(x) = k \ln x + \ln C \implies f(x) = Cx^k.
    \]
    
    (\(\Leftarrow\)) If \( f(x) = Cx^k \), then:
    \[
    \mh_f(x,t) = \frac{C(tx)^k}{Cx^k} = t^k,
    \]
    which is independent of \( x \).

    \item \textbf{(Independence of \( t \))}:
    (\(\Rightarrow\)) Assume \( \mh_f \) is independent of \( t \). Then:
    \[
    \frac{\partial}{\partial t} \left( \frac{f(tx)}{f(x)} \right) = \frac{x f'(tx)}{f(x)} = 0.
    \]
    Since \( f(x) > 0 \), we have \( x f'(tx) = 0 \). For \( x \neq 0 \), \( f^\prime(tx) = 0 \) for all \( t > 0 \), so \( f \) is constant.
    
    (\(\Leftarrow\)) If \( f \) is constant, \( \mh_f(x,t) = 1 \) is trivially independent of \( t \).
\end{enumerate}
\end{proof}


\begin{theorem}[Multiplicative Homogeneity (Non-Differentiable Case)]
Let \( I \subset (0, +\infty) \) be an interval and \( f:I \to (0, +\infty) \) a continuous function. 
Define the \emph{multiplicative homogeneity function} as:
\[
\mh_f(x,t) = \frac{f(tx)}{f(x)}, \quad x \in I, t > 0.
\]
Then:
\begin{enumerate}
    \item \( \mh_f \) is independent of \( x \) \textbf{if and only if} \( f \) is a power law:
    \[
    f(x) = Cx^k \quad \text{for constants } C > 0, k \in \mathbb{R}.
    \]
    \item \( \mh_f \) is independent of \( t \) \textbf{if and only if} \( f \) is constant.
\end{enumerate}
\end{theorem}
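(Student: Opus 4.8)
The plan is to follow the same two-part structure as in the differentiable case, but to replace the differentiation argument by a reduction to one of the Cauchy functional equations, using continuity as the regularity hypothesis that singles out the standard solutions (cf. \cite[128--130]{Kuczma2}). Observe first that, since $\mh_f(x,t)$ is required to be defined for every $x\in I$ and every $t>0$, one has $tI\subset I$ for all $t>0$, which forces $I=(0,+\infty)$; in particular $1\in I$, which will be convenient for normalizing.

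\textbf{Independence of $x$.} Suppose $\mh_f$ does not depend on $x$, say $\mh_f(x,t)=\phi(t)$. Then $f(tx)=\phi(t)f(x)$ for all $x,t>0$, and evaluating at $x=1$ gives $\phi(t)=f(t)/f(1)$, so $\phi$ is continuous because $f$ is. Pass to logarithms and substitute $x=e^{u}$, $t=e^{v}$: with $G(u):=\ln f(e^{u})$ the relation becomes $G(u+v)-G(u)=\ln\phi(e^{v})$ for all $u,v\in\R$, so the left-hand side depends only on $v$. Putting $u=0$ identifies $\ln\phi(e^{v})=G(v)-G(0)$, whence $G(u+v)=G(u)+G(v)-G(0)$. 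Thus $H:=G-G(0)$ satisfies Cauchy's equation $H(u+v)=H(u)+H(v)$; since $G$, and hence $H$, is continuous, $H$ is linear, $H(u)=ku$ for some $k\in\R$. Unwinding, $\ln f(x)=k\ln x+\ln f(1)$, i.e. $f(x)=Cx^{k}$ with $C:=f(1)>0$. The converse is the computation $\mh_{Cx^{k}}(x,t)=t^{k}$ already recorded in the differentiable case.

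\textbf{Independence of $t$, and the main obstacle.} If $\mh_f$ does not depend on $t$, write $\mh_f(x,t)=\psi(x)$; evaluating at $t=1$ gives $\psi\equiv 1$, so $f(tx)=f(x)$ for all $x,t>0$, and fixing $x$ while letting $t$ sweep $(0,+\infty)$ shows $f$ is constant. The converse is trivial. The only substantive point in the whole argument is the appeal to continuity in order to exclude the pathological (non-measurable) solutions of $H(u+v)=H(u)+H(v)$; without such a regularity assumption the power-law conclusion genuinely fails. The remaining steps — the exponential change of variables converting the multiplicative relation into an additive one, and the collapse of the two-variable identity to a one-variable Cauchy equation — are routine, and parallel the reductions used earlier in relating the four types of homogeneity.
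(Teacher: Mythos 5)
Your argument is correct and follows essentially the same route as the paper: part (1) reduces $f(tx)=\phi(t)f(x)$ to the additive Cauchy equation by taking logarithms and invokes continuity to get linearity, which is exactly the reduction the paper cites without carrying out; part (2) sets $t=1$ and concludes constancy just as the paper does. Your observation that $tI\subset I$ for all $t>0$ forces $I=(0,+\infty)$ (justifying the evaluation at $x=1$) is a small but worthwhile addition of rigor that the paper leaves implicit.
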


\begin{proof}
\begin{enumerate}
    \item \textbf{(Independence of \( x \))}:
    (\(\Rightarrow\)) If \( \mh_f(x,t) = g(t) \) for some \( g \), then:
    \[
    f(tx) = f(x) g(t).
    \]
    This is the well-known semi-pexiderized multiplicative Cauchy functional equation. For continuous \( f \), the only non-zero solutions are power functions:
    \[
    f(x) = Cx^k,
    \]
    where \( C > 0 \) and \( k \in \mathbb{R} \) are constants (proven by taking logarithms and reducing to the additive Cauchy equation).
    
    (\(\Leftarrow\)) If \( f(x) = Cx^k \), then:
    \[
    \mh_f(x,t) = \frac{C(tx)^k}{Cx^k} = t^k,
    \]
    which depends only on \( t \).

    \item \textbf{(Independence of \( t \))}:
    (\(\Rightarrow\)) If \( \mh_f(x,t) = h(x) \), then:
    \[
    f(tx) = f(x) h(x).
    \]
    For fixed \( x \), the right-hand side is independent of \( t \), so \( f(tx) \) must be constant in \( t \). Thus:
    \[
    f(tx) = f(x) \quad \forall t > 0,
    \]
    implying \( f \) is constant (set \( t = y/x \) for \( y \in I \)).
    
    (\(\Leftarrow\)) If \( f \) is constant, \( \mh_f(x,t) = 1 \) trivially.
\end{enumerate}
\end{proof}


For additive homogeneous functions we have
\begin{theorem}
Let $I \subset \R$ be an interval and $f:I \to (0, +\infty)$ be a differentiable function.
Then the additive homogeneity function of $f$ is independent of the variable
\begin{enumerate}
	\item 
	$x$, if and only if
	$f^\prime$ is $t$-periodic
    for all $t$ 
    (thus $f$ is  $t$-periodic plus a linear term); 
	\item 
	$t$, if, and only if, $f$ is constant on an interval.
\end{enumerate}
\end{theorem}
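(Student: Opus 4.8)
The plan is to differentiate the additive homogeneity function $\ah_f(x,t)=f(x+t)-f(x)$ with respect to each variable and read off what the vanishing of that partial derivative says about $f$; the converse implications then follow by integrating.

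\emph{Independence of $x$.} Since $\partial_x\ah_f(x,t)=f'(x+t)-f'(x)$, a differentiable $f$ has $\ah_f$ independent of $x$ exactly when $f'(x+t)=f'(x)$ for all admissible $x$ and $t$, i.e.\ when $f'$ is $t$-periodic for every $t\in T$. For the structural description I would fix such a $t$, let $c_t:=\ah_f(x,t)$ (well defined, as it does not depend on $x$), and set $p(x):=f(x)-(c_t/t)x$; then $p(x+t)-p(x)=c_t-(c_t/t)t=0$, so $p$ is $t$-periodic and $f$ is the sum of a $t$-periodic function and a linear one. For the converse, if $f'$ is $t$-periodic then the integral of $f'$ over any interval of length $t$ is the same, so $\ah_f(x,t)=\int_x^{x+t}f'(s)\,ds=\int_0^t f'(s)\,ds$ is independent of $x$.

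\emph{Independence of $t$.} Here $\partial_t\ah_f(x,t)=f'(x+t)$, so $\ah_f$ is independent of $t$ iff $f'(x+t)=0$ for all admissible $x,t$, i.e.\ $f'$ vanishes on $\{x+t:x\in I,\ t\in T\}$. Since $T$ is a non-trivial additive semigroup and $I$ an interval, this set contains a non-degenerate interval, on which $f$ is therefore constant. Conversely, if $f$ is constant then $\ah_f\equiv 0$, which is trivially independent of $t$.

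\emph{Main obstacle.} The differentiations are routine; the only delicate point is the structural reformulation in part (1). Passing from ``$f'$ is $t$-periodic'' to ``$f=$ periodic $+$ linear'' (and back) requires pinning the slope of the linear part to be exactly $\tfrac1t\int_0^t f'=\tfrac1t\,\ah_f(\cdot,t)$, and if one demands independence of $x$ simultaneously for \emph{all} $t\in T$, these slopes for different $t$ must be mutually compatible, which forces $f'$ to be genuinely constant once $T$ is rich enough (for instance when it contains an interval). I would flag this explicitly: for a single generating $t$ the ``periodic plus linear'' picture is exact, whereas for a large $T$ it degenerates to $f$ being affine.
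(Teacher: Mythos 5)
Your proof takes essentially the same route as the paper's---compute $\frac{\partial}{\partial x}\textsc{a}_f(x,t)=f'(x+t)-f'(x)$ and $\frac{\partial}{\partial t}\textsc{a}_f(x,t)=f'(x+t)$ and read the conclusions off their vanishing---and it is correct; in fact it is more complete, since the paper proves only the forward implications and merely asserts the ``periodic plus linear'' decomposition that you derive explicitly via $p(x)=f(x)-(c_t/t)x$. Your closing observation, that demanding independence of $x$ simultaneously for all $t$ in a sufficiently rich $T$ forces $f$ to be affine, is a genuine subtlety the theorem's parenthetical glosses over; the only small quibble is that the converse in (1) is cleaner via $\frac{d}{dx}\left(f(x+t)-f(x)\right)=0$ than via $\int_x^{x+t}f'$, since a merely differentiable $f$ need not have a (Riemann) integrable derivative.
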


\begin{proof} 
\begin{enumerate}
    \item 
    Since $f$ is differentiable and $\ah_f$ independent of $x$, we have
    \begin{equation*}
\frac{\partial}{\partial x} \ah_f(x,t) = 
1 f^\prime (x+t) -  f^\prime (x)
=0,
\end{equation*}
whence $f^\prime (x+t) =  f^\prime (x)$ as claimed.

  \item 
Assume $\ah_f$ independent of $t$.
Since $f$ is differentiable we have
    \begin{equation*}
\frac{\partial}{\partial t} \ah_f(x,t) = 
 f^\prime (x+t) - 0
=0.
\end{equation*}
Thus, $f^\prime (x+t) =0$,
which means that $f$ is constant on an interval.
\end{enumerate} 
\end{proof}


\begin{theorem}[Exponential Homogeneity]
Let \( I \subset \mathbb{R} \) be an interval and \( f:I \to (0, +\infty) \) a differentiable function. 
Define the \emph{exponential homogeneity function} of \( f \) as:
\[
\eh_f(x,t) = \frac{f(x+t)}{f(x)}, \quad x, x+t \in I.
\]
Then:
\begin{enumerate}
    \item \( \eh_f \) is independent of \( x \) \textbf{if, and only if,}
    \[
    \eh_f(x,t) = e^{t \cdot \frac{f'(x)}{f(x)}},
    \]
    i.e., when \( f \) is an exponential function \( f(x) = Ce^{kx} \) with \( C,k \in \mathbb{R} \).

    \item \( \eh_f \) is independent of \( t \) \textbf{if, and only if,} \( f^\prime \equiv 0 \) on \( I \), 
    i.e., when \( f \) is constant.
\end{enumerate}
\end{theorem}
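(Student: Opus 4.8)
The plan is to follow the differentiation argument already used for multiplicative homogeneity, exploiting that $f$ is positive and differentiable, so that $\log f$ is differentiable as well. For part~(1), assume $\eh_f$ is independent of $x$. Then $\partial_x \eh_f(x,t) = \bigl(f'(x+t)f(x) - f(x+t)f'(x)\bigr)/f(x)^2 = 0$, and dividing by $f(x+t)f(x) > 0$ rewrites this as $(\log f)'(x+t) = (\log f)'(x)$ for all admissible $x,t$.

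Since $I$ is an interval, for any $x_1, x_2 \in I$ one may choose $t = x_2 - x_1$, so the identity forces $(\log f)'$ to take the same value at $x_1$ and $x_2$; hence $(\log f)' \equiv k$ for some constant $k \in \R$. Integrating yields $\log f(x) = kx + c$, that is $f(x) = Ce^{kx}$ with $C = e^{c} > 0$, and then $\eh_f(x,t) = e^{kt}$. Since $k = f'(x)/f(x)$ for every $x$, this is exactly the asserted formula $\eh_f(x,t) = e^{t f'(x)/f(x)}$. The converse is a one-line check: $f(x) = Ce^{kx}$ gives $\eh_f(x,t) = e^{kt}$, independent of $x$.

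For part~(2), differentiate instead in $t$: if $\eh_f$ is independent of $t$ then $\partial_t \eh_f(x,t) = f'(x+t)/f(x) = 0$, and since $f(x) > 0$ this gives $f'(x+t) = 0$. Letting $t$ range over $I - x$ for a fixed $x$, the point $x+t$ sweeps out all of $I$, so $f' \equiv 0$ on $I$ and $f$ is constant; conversely a constant $f$ gives $\eh_f \equiv 1$, trivially independent of $t$.

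I do not anticipate a real obstacle: the computations are routine, and the only step deserving a word of care is the passage from the pointwise relation $(\log f)'(x+t) = (\log f)'(x)$ to global constancy of $(\log f)'$, which is precisely where connectedness of the interval $I$ enters---on a disconnected domain one could glue exponentials with different rates on different components.
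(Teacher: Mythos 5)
Your proof is correct and follows essentially the same route as the paper: differentiate $\eh_f$ in $x$ (resp.\ $t$), reduce to constancy of the logarithmic derivative $(\log f)'$, and integrate. Your explicit remark that connectedness of the interval $I$ is what upgrades the pointwise identity $(\log f)'(x+t)=(\log f)'(x)$ to global constancy is a small but genuine improvement in rigour over the paper's terser "thus the logarithmic derivative is constant."
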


\begin{proof}
\begin{enumerate}
    \item \textbf{(Independence of \( x \))}:
    Assume \( \eh_f \) is independent of \( x \). Then \( \frac{\partial}{\partial x} \eh_f(x,t) = 0 \). 
    Computing the derivative:
    \[
    \frac{\partial}{\partial x} \left( \frac{f(x+t)}{f(x)} \right) = \frac{f^\prime(x+t)f(x) - f(x+t)f^\prime(x)}{f(x)^2} = 0.
    \]
    This implies:
    \[
    \frac{f^\prime(x+t)}{f(x+t)} = \frac{f^\prime(x)}{f(x)} \quad  x, t.
    \]
    Thus, the logarithmic derivative \( \frac{f^\prime}{f} \) is constant. Let \( k := \frac{f^\prime(x)}{f(x)} \).
    Integrating yields:
    \[
    \ln f(x) = kx + C \quad \Rightarrow \quad f(x) = Ce^{kx}.
    \]
    Conversely: For \( f(x) = Ce^{kx} \), we have:
    \[
    \eh_f(x,t) = \frac{e^{k(x+t)}}{e^{kx}} = e^{kt},
    \]
    which is clearly independent of \( x \).

    \item \textbf{(Independence of \( t \))}:
    Assume \( \eh_f \) is independent of \( t \). Then \( \frac{\partial}{\partial t} \eh_f(x,t) = 0 \).
    Computing the derivative:
    \[
    \frac{\partial}{\partial t} \left( \frac{f(x+t)}{f(x)} \right) = \frac{f'(x+t)}{f(x)} = 0.
    \]
    Since \( f(x) > 0 \), we get \( f^\prime(x+t) = 0 \) for all \( x+t \in I \), hence \( f^\prime \equiv 0 \) and \( f \) is constant.
    
    Conversely: If \( f \) is constant, then \( \eh_f(x,t) = 1 \) is trivially independent of \( t \).
\end{enumerate}
\end{proof}


\begin{theorem}[Logarithmic Homogeneity]
Let $I \subset \mathbb{R}$ be an interval and $f: I \to \mathbb{R}$ be a differentiable function. Define the \emph{logarithmic homogeneity function} as:
\[
\textsc{l}_f (x, t) = f(tx) - f(x), \quad x \in I, t > 0.
\]
Then:

\begin{enumerate}
    \item \textbf{Independence of $x$}:
    \begin{enumerate}
        \item If $0 \in I$, then $\textsc{l}_f$ is independent of $x$ \textbf{if, and only if,} $f$ is constant.
        \item If $0 \notin I$, then $\textsc{l}_f$ is independent of $x$ \textbf{if, and only if,}:
        \[
        f(x) = 
        \begin{cases}
        k\log x + C & \text{for } I \subset (0, +\infty) \\
        k\log(-x) + C & \text{for } I \subset (-\infty, 0)
        \end{cases}
        \]
        where $k, C \in \mathbb{R}$.
    \end{enumerate}
    
    \item \textbf{Independence of $t$}: 
    $\textsc{l}_f$ is independent of $t$ \textbf{if and only if} $f$ is constant.
\end{enumerate}
\end{theorem}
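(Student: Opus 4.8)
The plan is to convert each independence statement into the vanishing of a partial derivative of $\textsc{l}_f$ and then to integrate the resulting elementary identity separately on each connected component of $I\setminus\{0\}$.

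For independence of $x$ I would first compute
\[
\frac{\partial}{\partial x}\,\textsc{l}_f(x,t)=t\,f'(tx)-f'(x),
\]
so that $\textsc{l}_f$ is independent of $x$ exactly when $t\,f'(tx)=f'(x)$ for all admissible $x,t$. Multiplying by $x$ rewrites this as $g(tx)=g(x)$ for the auxiliary function $g(x):=x\,f'(x)$; on $I\cap(0,+\infty)$ (and likewise on $I\cap(-\infty,0)$) the substitution $t=y/x$ between two points of equal sign shows that $g$ is constant there, say $g\equiv k$. Hence $f'(x)=k/x$, and since $\bigl(k\log|x|\bigr)'=k/x$ the function $f(x)-k\log|x|$ has zero derivative on a connected set, hence is constant; this yields $f(x)=k\log x+C$ on $I\subset(0,+\infty)$ and $f(x)=k\log(-x)+C$ on $I\subset(-\infty,0)$. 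The converse directions are a one-line check: for constant $f$ one has $\textsc{l}_f\equiv 0$, and for $f(x)=k\log|x|+C$ one gets $\textsc{l}_f(x,t)=k\log t$, which is free of $x$.

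It remains to handle the case $0\in I$, and this is the step I expect to be the main obstacle. The computation above still forces $f(x)=k\log|x|+C$ on a punctured neighbourhood of $0$; but $\log|x|\to-\infty$ as $x\to 0$, whereas $f$, being differentiable, is continuous at $0$. Therefore $k=0$, so $f$ is constant on each side of $0$, and continuity at $0$ glues these constants into a single value, i.e.\ $f$ is constant on $I$. This incompatibility of the logarithmic branch with regularity at the origin is precisely what produces the split between the two sub-cases; everything else is routine.

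Finally, for independence of $t$ I would compute
\[
\frac{\partial}{\partial t}\,\textsc{l}_f(x,t)=x\,f'(tx),
\]
whose identical vanishing forces $f'(tx)=0$ for every nonzero $x\in I$ and every admissible $t$. Letting $t$ range over an interval shows $f'\equiv 0$ on each component of $I\setminus\{0\}$, and continuity of $f$ at $0$ (when $0\in I$) merges the one-sided constants, so $f$ is constant on $I$; the converse is again trivial.
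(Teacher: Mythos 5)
Your proof is correct, and for the independence-of-$x$ part it follows essentially the same route as the paper: differentiate in $x$ to get $t f'(tx)=f'(x)$ and integrate. You handle two details more carefully than the paper does. First, the paper extracts the constant by setting $x=1$ in $t f'(tx)=f'(x)$, which tacitly assumes $1\in I$; your reformulation via the scale-invariant auxiliary function $g(x)=x f'(x)$ and the substitution $t=y/x$ works on each component of $I\setminus\{0\}$ without that assumption. Second, your treatment of the case $0\in I$ (the logarithm is unbounded near $0$ while $f$ must be continuous there, forcing $k=0$, after which continuity glues the one-sided constants) is a cleaner version of the paper's terse remark that ``differentiability at $0$ requires $f'(1)=0$.'' For the independence-of-$t$ part you genuinely diverge: you differentiate in $t$ to get $x f'(tx)=0$ and then must separately dispose of $x=0$ and invoke continuity to merge the components, whereas the paper simply sets $t=1$ in $f(tx)-f(x)=h(x)$ to conclude $h\equiv 0$ and hence $f(tx)=f(x)$ for all $t$ --- an argument that needs no differentiability at all and avoids the $x=0$ case. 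Both are valid; the paper's is the more elementary of the two for that clause.
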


\begin{proof}
\begin{enumerate}
    \item \textbf{Independence of $x$}:
    
    ($\Rightarrow$) Assume $\textsc{l}_f$ is independent of $x$. Then:
    \begin{align*}
        f(tx) - f(x) &= g(t) \quad \text{for some function } g \\
        \frac{d}{dx}[f(tx) - f(x)] &= 0 \\
        tf'(tx) - f'(x) &= 0 \\
        tf'(tx) &= f'(x)
    \end{align*}
    
    Let $x = 1$:
    \[
    f'(t) = \frac{f'(1)}{t}
    \]
    Integrating gives:
    \[
    f(t) = f'(1)\log|t| + C.
    \]
    
    \begin{enumerate}
        \item For $0 \in I$, differentiability at $0$ requires $f^\prime (1) = 0$, so $f$ is constant.
        \item For $0 \notin I$, we obtain logarithmic forms as stated.
    \end{enumerate}
    
    ($\Leftarrow$) For the logarithmic forms:
    \[
    \textsc{l}_f(x,t) = k\log(tx) - k\log x = k\log t
    \]
    which is independent of $x$. The constant case is trivial.
    
    \item \textbf{Independence of $t$}:
    
    ($\Rightarrow$) Assume $\textsc{l}_f$ is independent of $t$:
    \[
    f(tx) - f(x) = h(x)
    \]
    Setting $t = 1$ implies $h(x) = 0$, so $f(tx) = f(x)$ for all $t > 0$. Thus $f$ is constant.
    
    ($\Leftarrow$) If $f$ is constant, $\textsc{l}_f(x,t) = 0$ is trivially independent of $t$.
\end{enumerate}
\end{proof}

Under natural assumptions on $T$,
the preceeding results can be improved by simpy reducing 
the problem to semi-pexiderized Cauchy equation.

\begin{remark}
    Let \(T \subset \mathbb{R}\) satisfy \(0 \in T\) and \(-I + I \subset T\) for some interval \(I\) around \(0\).  
    Let \(f : \mathbb{R} \to \mathbb{R}\) and consider the four homogeneity functions

    \begin{align*}
        \textsc{a}_f(x,t) &= f(x+t) - f(x), &
        \textsc{m}_f(x,t) &= \frac{f(tx)}{f(x)}, \\[4pt]
        \eh_f(x,t) &= \frac{f(x+t)}{f(x)}, &
        \textsc{l}_f(x,t) &= f(tx) - f(x).
    \end{align*}

    In cases involving division, assume \(f(x) \neq 0\) for all \(x\).

    \begin{enumerate}
        \item \textbf{Additive homogeneity:}  
              \begin{itemize}
                  \item \emph{Only depends on \(x\):} If \(\textsc{a}_f(x,t) = g(x)\) for all \(t \in T\), then setting \(t=0\) gives \(g(x) \equiv 0\). Hence \(f(x+t) = f(x)\) for all \(x,t \in T\). If \(T\) contains an interval around \(0\), then \(f\) is constant.
                  \item \emph{Only depends on \(t\):} If \(\textsc{a}_f(x,t) = a(t)\), then \(f(x+t) = f(x) + a(t)\). With \(x=0\): \(a(t) = f(t) - f(0)\). Then \(h(x):= f(x)-f(0)\) satisfies \(h(x+t) = h(x) + h(t)\) for all \(x,t \in T\). So \(h\) is additive on \(T\).
              \end{itemize}

        \item \textbf{Multiplicative homogeneity:}  
              \begin{itemize}
                  \item \emph{Only depends on \(x\):} If \(\textsc{m}_f(x,t) = g(x)\), set \(t=1\): \(g(x) \equiv 1\). Then \(f(tx) = f(x)\) for all \(t \in T\). Taking \(t=0\) (if \(0 \in T\)) gives \(f(0) = f(x)\), so \(f\) is constant. If \(0 \notin T\), from \(f(tx)=f(x)\) for all \(t \in T\) and \(x\), choosing \(x \neq 0\) and varying \(t\) over a nontrivial set can still force constancy.
                  \item \emph{Only depends on \(t\):} If \(\textsc{m}_f(x,t) = m(t)\), then \(f(tx) = m(t) f(x)\). For \(x=1\): \(f(t) = m(t) f(1)\). Hence \(m(t) = f(t)/c\) with \(c = f(1) \neq 0\), and thus \(c \cdot f(tx) = f(t) f(x)\). Then \(g(x):= f(x)/c\) satisfies \(g(tx) = g(t) g(x)\), the multiplicative Cauchy equation.
              \end{itemize}

        \item \textbf{Exponential homogeneity:}  
              \begin{itemize}
                  \item \emph{Only depends on \(x\):} If \(\eh_f(x,t) = g(x)\), set \(t=0\): \(g(x) \equiv 1\). Then \(f(x+t) = f(x)\) for all \(x,t\), so \(f\) is constant.
                  \item \emph{Only depends on \(t\):} If \(\eh_f(x,t) = e(t)\), then \(f(x+t) = e(t) f(x)\). Set \(x=0\): \(e(t) = f(t)/a\) with \(a = f(0) \neq 0\). Hence \(a \cdot f(x+t) = f(x) f(t)\). Then \(g(x):= f(x)/a\) satisfies \(g(x+t) = g(x) g(t)\), the exponential Cauchy equation.
              \end{itemize}

        \item \textbf{Logarithmic homogeneity:}  
              \begin{itemize}
                  \item \emph{Only depends on \(x\):} If \(\textsc{l}_f(x,t) = g(x)\), set \(t=1\): \(g(x) \equiv 0\). Hence \(f(tx) = f(x)\) for all \(t \in T\), \(x\). Taking \(t=0\) (if \(0 \in T\)) yields \(f(0) = f(x)\), so \(f\) constant. If \(0 \notin T\), analogous argument as in the multiplicative case applies.
                  \item \emph{Only depends on \(t\):} If \(\textsc{l}_f(x,t) = \ell(t)\), then \(f(tx) = f(x) + \ell(t)\). Set \(x=1\): \(\ell(t) = f(t) - c\) with \(c = f(1)\). Hence \(h(x):= f(x)-c\) satisfies \(h(tx) = h(x) + h(t)\), the logarithmic Cauchy equation.
              \end{itemize}
    \end{enumerate}

    In summary, dependence on \(x\) alone typically forces \(g\) to be trivial (0 or 1) and ultimately constancy of \(f\), whereas dependence on \(t\) alone leads to one of the classical Cauchy-type functional equations (additive, multiplicative, exponential, or logarithmic) for a suitably normalized version of \(f\).
\end{remark}


\section{Equality problem of homogeneity functions}
If functions $f$ and $g$ coincide, they obviously have the same homogeneity function. 
On the other hand, 
if homogeneity functions of same type coincide, their generator
do not necessarily have to agree.

\begin{remark}[Equality of Multiplicative Homogeneity Functions]
Let $I, T \subset \R$ be non-empty intervals such that $T\cdot I \subset I$.
Assume 
\begin{eqnarray*}
{\textsc{m}_f}(x,t) = {\textsc{m}_g} (x,t), \qquad x \in I, t \in T,
\end{eqnarray*}
for given functions $f,g: I \to (0, +\infty)$.
Then $f$ and $g$ are proportional, i.e. there is $c>0$ such that
\begin{eqnarray*}
f (x) = c g (x), \qquad x \in I.
\end{eqnarray*}
\end{remark}

\begin{proof}
Assuming the equality of two multiplicative homogeneity functions for non-vanishing functions $f$ and $g$ we have
\begin{eqnarray*}
\frac{f(tx)}{f(x)} = \frac{g(tx)}{g(x)}, \qquad x \in I, t \in T,
\end{eqnarray*}
 whence
\begin{eqnarray*}
\frac{f(tx)}{g(tx)} = \frac{f(x)}{g(x)}, \qquad x \in I, t \in T.
\end{eqnarray*}
Thus, the function $h:=\frac{f}{g}$ is homogeneous of degree zero, which means that $h$ is constant, say $h(x)=c$.
By the definition of $h$, we have $f= c g$ as claimed.
\end{proof}

\begin{remark}[Equality of Additive Homogeneity Functions]
Let $I, T$ be real non-empty intervals such $I+T \subset I$.
Assume 
\begin{eqnarray*}
\textsc{a}_f (x,t) = \textsc{a}_g (x,t), \qquad x \in I, t \in T,
\end{eqnarray*}
for given functions $f,g: I \to \R$.
Then $f$ and $g$ agree up to a constant, i.e. there is $c \in \R$ such that
\begin{eqnarray*}
f (x) = c + g (x), \qquad x \in I.
\end{eqnarray*}
\end{remark}

\begin{proof}
Assuming the equality of two additive homogeneity functions, we have
\begin{eqnarray*}
f(x+t)-f(x) = g(x+t)-g(x), \qquad x \in I, t \in T,
\end{eqnarray*}
 whence
\begin{eqnarray*}
f(x+t)-g(x+t) =f(x) -g(x),, \qquad x \in I, t \in T.
\end{eqnarray*}
Thus, the function $h:={f}-{g}$ is $t$-periodic for all 
$t \in T$, which means, since $T$ is an interval, that $h$ is constant, 
say $h(x)=c$.
By the definition of $h$, we have $f= c+ g$ as claimed.
\end{proof}

\begin{remark}[Equality of Logarithmic Homogeneity Functions]
Let $I, T$ be real non-empty intervals such that $T \cdot I \subset I$.
Assume 
\begin{eqnarray*}
\textsc{l}_f (x,t) = \textsc{l}_g (x,t), \qquad x \in I, t \in T,
\end{eqnarray*}
for given functions $f,g: I \to \R$.
Then $f$ and $g$ agree up to a constant, i.e. there is $c \in \R$ such that
\begin{eqnarray*}
f (x) = c + g (x), \qquad x \in I.
\end{eqnarray*}
\end{remark}

\begin{proof}
Assuming the equality of two logarithmic homogeneity functions, we have
\begin{eqnarray*}
f(tx)-f(x) = g(tx)-g(x), \qquad x \in I, t \in T,
\end{eqnarray*}
 whence
\begin{eqnarray*}
f(tx)-g(tx) =f(x) -g(x),, \qquad x \in I, t \in T.
\end{eqnarray*}
Thus, the function $h:={f}-{g}$ is $0$-homogeneous,
which means that $h$ is constant, 
say $h(x)=c$.
By the definition of $h$, we have $f= c+ g$ as claimed.
\end{proof}

\begin{remark}[Equality of Exponential Homogeneity Functions]
Let $I, T$ be real non-empty intervals such that $I + T \subset I$..
Assume 
\begin{eqnarray*}
{\eh_f}(x,t) = \eh_g (x,t), \qquad x \in I, t \in T,
\end{eqnarray*}
for given functions 
$f,g: I \to (0, +\infty)$.
Then $f$ and $g$ are proportional, i.e. there is positive $c$ such that
\begin{eqnarray*}
f (x) = c  g (x), \qquad x \in I.
\end{eqnarray*}
\end{remark}

\begin{proof}
Assuming the equality of two exponential homogeneity functions for non-vanishing functions $f$ and $g$ we have
\begin{eqnarray*}
\frac{f(x+t)}{f(x)} 
= \frac{g(x+t)}{g(x)}, \qquad x \in I, t \in T,
\end{eqnarray*}
 whence
\begin{eqnarray*}
\frac{f(x+t)}{g(x+t)} 
= \frac{f(x)}{g(x)}, \qquad x \in I, t \in T.
\end{eqnarray*}
Thus, the function $h:=\frac{f}{g}$ is $t$-periodic for all $t \in T$,
which means that $h$ is constant, say $h(x)=c$.
By the definition of $h$, 
we have $f= c g$ as claimed.
\end{proof}

\section{The Cauchy equations and Homogeneity Functions}
The four types of Cauchy equations
can be expressed in terms of homogeneity functions -
how exactly we see in the following

\begin{remark}
Let $I \subset \R$ be a non-empty interval and $f: I \to \R$ a function.
Then $f$ is additive if, and only if,
\begin{enumerate}
    \item 
its additive homogeneity function satisfies
\begin{align*}
\textsc{a}_f (x,t) = f(t), \qquad x,t;   
\end{align*}

\item 
its exponential homogeneity function satisfies
\begin{align*}
\eh_f (x,t) = 1+ \frac{f(t)}{f(x)}, \qquad x,t;   
\end{align*}
\end{enumerate}
\end{remark}

\begin{proof}
    \begin{enumerate}
    \item
    \textbf{$(\Rightarrow)$} 
    Assume $f$ is additive, i.e.
\begin{align*}
f(x+y)= f(x) + f(y)  
\end{align*}
for all $x,y$.
 Thus, for all $x,t$,
\begin{align*}
\textsc{a}_f (x,t) &= f(x+t)-f(x)\\
&=
f(x)+f(t)-f(x) \\
&=
f(t),
\end{align*}
 for all $x,t$ as claimed. \\
\textbf{$(\Leftarrow)$} Vice versa, assuming $\textsc{a}_f (x,t)=f(t)$, we have, by the definition of the additive homogeneity function,
\begin{align*}
f(x+t)-f(x)
&=
f(t),
\end{align*}
which means that $f$ is additive.
\item
 \textbf{$(\Rightarrow)$} 
    Assume $f$ is additive.
Thus, for all $x,t$,
\begin{align*}
\eh_f (x,t) &= \frac{f(x+t)}{f(x)}\\
&=
\frac{f(x)+ f(t)}{f(x)} \\
&=
1+ \frac{f(t)}{f(x)}.
\end{align*}
  \textbf{$(\Leftarrow)$}   Vice versa, assuming $\eh_f (x,t)= 1+ \frac{f(t)}{f(x)}$,
     we have, by the definition of the exponential homogeneity function,
\begin{align*}
\frac{f(x+t)}{f(x)}
&=
1+ \frac{f(t)}{f(x)}
\end{align*}
for all $x,t$.
Multiplying here both sides by $f(x)$, the claim follows.
\end{enumerate}
\end{proof}
Similarly, the exponential Cauchy equation can be expressed in terms of
additive and exponential, respectively, homogeneity function.
\begin{remark}
Let $I \subset \R$ be a non-empty interval and $f: I \to \R$ a function.
Then $f$ is exponential if, and only if,
\begin{enumerate}
    \item 
its additive homogeneity function satisfies
\begin{align*}
\textsc{a}_f (x,t) =f(x) (f(t)-1), \qquad x,t;   
\end{align*}

\item 
its exponential homogeneity function satisfies
\begin{align*}
\eh_f (x,t) = f(t), \qquad x,t;   
\end{align*}
\end{enumerate}
\end{remark}

\begin{proof}
    \begin{enumerate}
    \item
   \textbf{$(\Rightarrow)$} 
    Assume $f$ is exponential, i.e.
\begin{align*}
f(x+y)= f(x)  f(y)  
\end{align*}
for all $x,y$.
 Thus, for all $x,t$,
\begin{align*}
\textsc{a}_f (x,t) &= f(x+t)-f(x)\\
&=
f(x)f(t)-f(x) \\
&=
f(x) (f(t)-1),
\end{align*}
as claimed. \\
\textbf{$(\Leftarrow)$} Vice versa, assuming $\textsc{a}_f (x,t)= f(x) (f(t)-1)$, we have, by the definition of the additive homogeneity function,
\begin{align*}
f(x+t)-f(x)
&=
f(x) (f(t)-1),
\end{align*}
for all $x,t$. 
Adding here $f(x)$ to both sides,
gives us that $f$ is exponential.
\item
 \textbf{$(\Rightarrow)$} 
    Assume $f$ is exponential.
Thus, for all $x,t$,
\begin{align*}
\eh_f (x,t) &= \frac{f(x+t)}{f(x)}\\
&=
\frac{f(x) f(t)}{f(x)} \\
&=
f(t).
\end{align*}
\textbf{$(\Leftarrow)$}  Vice versa, assuming $\eh_f (x,t)=f(t)$,
     we have, by the definition of the exponential homogeneity function,
\begin{align*}
\frac{f(x+t)}{f(x)}
&=
f(t)
\end{align*}
for all $x,t$.
Multiplying here both sides by $f(x)$ it follows that $f$ is exponential.
\end{enumerate}
\end{proof}
Similarly, the two remaining Cauchy equations can be expressed in terms of the multiplicative and logarithmic homogeneity functions, respectively.

\begin{remark}
Let $I \subset \R$ be a non-empty interval and
$f: I \to (0, + \infty)$ a function.
Then $f$ is logarithmic if, and only if,
\begin{enumerate}
    \item 
its multiplicative homogeneity function satisfies
\begin{align*}
\textsc{m}_f (x,t) =1+ \frac{f(t)}{f(x)}, \qquad x,t;   
\end{align*}

\item 
for its logarithmic homogeneity function holds
\begin{align*}
\textsc{l}_f (x,t) = f(t), \qquad x,t;   
\end{align*}
\end{enumerate}
\end{remark}

\begin{proof}
    \begin{enumerate}
    \item
  \textbf{$(\Rightarrow)$} 
    Assume $f$ is logarithmic, i.e.
\begin{align*}
f(x y)= f(x) + f(y)  
\end{align*}
for all $x,y$.
 Thus, for all $x,t$,
\begin{align*}
\textsc{m}_f (x,t) &= \frac{f(tx)}{f(x)}\\
&=
\frac{f(t) + f(x)}{f(x)}\\
&=
1 + \frac{f(t)}{f(x)},
\end{align*}
as claimed. \\
\textbf{$(\Leftarrow)$} Vice versa, assuming $\textsc{m}_f (x,t)= 1 + \frac{f(t)}{f(x)}$,
we have, by the definition of the multiplicative homogeneity function,
\begin{align*}
\frac{f(tx)}{f(x)}
&=
1 + \frac{f(t)}{f(x)},
\end{align*}
for all $x,t$. 
Multiplying both sides by $f(x)$,
gives us that $f$ is logarithmic.
\item
 \textbf{$(\Rightarrow)$} 
    Assume $f$ is logarithmic.
Thus, for all $x,t$,
\begin{align*}
\textsc{l}_f (x,t) &= {f(tx)}-{f(x)}\\
&=
{f(t)+f(x)}-{f(x)} \\
&=
f(t).
\end{align*}
\textbf{$(\Leftarrow)$}    Vice versa, assuming $\textsc{l}_f (x,t)=f(t)$,
     we have, by the definition of the logarithmic homogeneity function,
\begin{align*}
 {f(tx)}-{f(x)}
&=
f(t)
\end{align*}
for all $x,t$.
Adding here $f(x)$ to both sides, gives us that $f$ is logarithmic.
\end{enumerate}
\end{proof}

\begin{remark}
Let $I \subset \R$ be a non-empty interval and
$f: I \to (0, + \infty)$ a function.
Then $f$ is multiplicative if, and only if,
\begin{enumerate}
    \item 
its multiplicative homogeneity function satisfies
\begin{align*}
\textsc{m}_f (x,t) =f(t), \qquad x,t;   
\end{align*}

\item 
its logarithmic homogeneity function satisfies
\begin{align*}
\textsc{l}_f (x,t) = f(x)(f(t)-1), \qquad x,t;   
\end{align*}
\end{enumerate}
\end{remark}

\begin{proof}
    \begin{enumerate}
    \item
    \textbf{$(\Rightarrow)$} 
    Assume $f$ is multiplicative, i.e.
\begin{align*}
f(xy)= f(x)  f(y)  
\end{align*}
for all $x,y$.
 Thus, for all $x,t$,
\begin{align*}
\textsc{m}_f (x,t) &= \frac{f(tx)}{f(x)}\\
&=
\frac{f(t) f(x)}{f(x)} \\
&=
f(t),
\end{align*}
 as claimed. \\
 \textbf{$(\Leftarrow)$} Vice versa, assuming $\textsc{m}_f (x,t) = f(t)$, we have, by the definition of the multiplicative homogeneity function,
\begin{align*}
\frac{f(tx)}{f(x)}
&=
f(t)
\end{align*}
for all $x,t$. 
Multiplying here both sides by $f(x)$,
gives us that $f$ is multiplicative.
\item
 \textbf{$(\Rightarrow)$} 
    Assume $f$ is multiplicative.
Thus, for all $x,t$,
\begin{align*}
\textsc{l}_f (x,t) &= {f(tx)}-{f(x)}\\
&=
{f(t)  f(x)}-{f(x)} \\
&=
f(x)(f(t)-1).
\end{align*}
  \textbf{$(\Leftarrow)$}    Vice versa, assuming $\textsc{l}_f (x,t)=f(x)(f(t)-1)$,
     we have, by the definition of the logarithmic homogeneity function,
\begin{align*}
{f(tx)}-{f(x)}
&=
f(x)(f(t)-1)
\end{align*}
for all $x,t$.
Adding $f(x)$ to both sides gives that $f$ is multiplicative.
This concludes the proof.
\end{enumerate}
\end{proof}

\section{Homogeneity functions of Cauchy Quotients}
Here we consider the four types of Cauchy quotients as functionals 
and ask how the respective kind of homogeneity of the generator is transported.
Let us recall the Cauchy quotients: for an arbitrary function $f: (0, + \infty) \to (0, + \infty)$
and $k \in \N$, $k \geq 2$, arbitrarily fixed,
we define
\begin{align*}
A_f (x_1, \ldots, x_k)&=\frac{f(x_1)+ \cdots + f(x_k)}{f(x_1 + \cdots + x_k)}, \\
B_f (x_1, \ldots, x_k)&=\frac{f(x_1)\cdot \cdots \cdot f(x_k)}{f(x_1 + \cdots + x_k)}, \\
L_f (x_1, \ldots, x_k)&=\frac{f(x_1)+ \cdots + f(x_k)}{f(x_1 \cdot \cdots \cdot x_k)}, \\
P_f (x_1, \ldots, x_k)&=\frac{f(x_1) \cdot \cdots \cdot f(x_k)}{f(x_1 \cdot \cdots \cdot x_k)}, 
\end{align*}
as the additive, exponential (traditionally known as beta-type function), logarithmic and multiplicative Cauchy quotient.
Assume that $f$ has (multiplicative) homogeneity function $\mh_f=:m$, i.e. 
\begin{equation*}
f(tx)= m(x,t) f(x), \qquad x \in I,t \in T.
\end{equation*}
Then we have
\begin{equation*}
P_{f,k}(tx_1, \ldots, tx_k)= \frac{m(x_1, t) \cdots m(x_k, t)}{m(x_1 \cdots x_k, t^k)} P_{f,k} (x_1, \ldots, x_k), \qquad x_1, \ldots, x_k \in I, t \in T,
\end{equation*}
and
\begin{equation*}
B_{f,k}(tx_1, \ldots, tx_k)= \frac{m(x_1, t) \cdots m(x_k, t)}{m(x_1 +\cdots +x_k, t)} B_{f,k} (x_1, \ldots, x_k), \qquad x_1, \ldots, x_k \in I, t \in T.
\end{equation*}

More systematically:
\begin{align*}
&\text{Given the homogeneity functions:} \\
&\textsc{a}_f(x,t) = f(x+t) - f(x), \\
&\textsc{m}_f(x,t) = \frac{f(tx)}{f(x)}, \\
&\eh_f(x,t) = \frac{f(x+t)}{f(x)}, \\
&\textsc{l}_f(x,t) = f(tx) - f(x).
\end{align*}

\medskip

\begin{align*}
&\text{(1) Additive Cauchy quotient } A_f: \\
&A_f(x_1, \ldots, x_k) = \frac{f(x_1) + \cdots + f(x_k)}{f(x_1 + \cdots + x_k)}, \\
&A_f(x_1 + t, \ldots, x_k + t) = \frac{(f(x_1) + \textsc{a}_f(x_1,t)) + \cdots + (f(x_k) + \textsc{a}_f(x_k,t))}
       {f(x_1 + \cdots + x_k) + \textsc{a}_f(x_1 + \cdots + x_k, k t)}.
\end{align*}

\medskip

\begin{align*}
&\text{(2) Multiplicative Cauchy quotient } P_f: \\
&P_f(x_1, \ldots, x_k) = \frac{f(x_1) \cdots f(x_k)}{f(x_1 \cdots x_k)}, \\
&P_f(t x_1, \ldots, t x_k) = \frac{\textsc{m}_f(x_1, t) f(x_1) \cdots \textsc{m}_f(x_k, t) f(x_k)}{\textsc{m}_f(x_1 \cdots x_k, t^k) f(x_1 \cdots x_k)} \\
&= \frac{\textsc{m}_f(x_1,t) \cdots \textsc{m}_f(x_k,t)}{\textsc{m}_f(x_1 \cdots x_k, t^k)} P_f(x_1, \ldots, x_k).
\end{align*}

\medskip

\begin{align*}
&\text{(3) Exponential Cauchy quotient } B_f: \\
&B_f(x_1, \ldots, x_k) = \frac{f(x_1) \cdots f(x_k)}{f(x_1 + \cdots + x_k)}, \\
&B_f(x_1 + t, \ldots, x_k + t) &= \frac{\eh_f(x_1,t) f(x_1) \cdots \eh_f(x_k,t) f(x_k)}{\eh_f(x_1 + \cdots + x_k, k t) f(x_1 + \cdots + x_k)} \\
&\quad &= \frac{\eh_f(x_1,t) \cdots \eh_f(x_k,t)}{\eh_f(x_1 + \cdots + x_k, k t)} B_f(x_1, \ldots, x_k).
\end{align*}

\medskip

\begin{align*}
&\text{(4) Logarithmic Cauchy quotient } L_f: \\
&L_f(x_1, \ldots, x_k) = \frac{f(x_1) + \cdots + f(x_k)}{f(x_1 \cdots x_k)}, \\
&L_f(t x_1, \ldots, t x_k) = \frac{(f(x_1) + \textsc{l}_f(x_1,t)) + \cdots + (f(x_k) + \textsc{l}_f(x_k,t))}
       {f(x_1 \cdots x_k) + \textsc{l}_f(x_1 \cdots x_k, t^k)}.
\end{align*}
Note that especially the homogeneity properties of multiplicative and exponential Cauchy quotients are remarkable,
since they factor and respect the form of the same Cauchy quotient!

\section*{Decomposition into even and odd part}
A function $f: \R \to \R$ may be uniquely decomposed into  a sum of
an even and odd part, namely

\begin{equation}
f(x)=
\underbrace{\frac{f(x)+f(-x)}{2}}_{=:f_e(x)}+
\underbrace{\frac{f(x)-f(-x)}{2}}_{=:f_o(x)}, \qquad x \in \R.
\label{eq:even_odd}
\end{equation}
Obviously, $f_e$ is an even and $f_o$ an odd function, 
i.e. $f_e(-x)=f_e(x)$ and $f_o(-x)=-f_o(x)$ for all $x\in \R$.
By conjugation,
 from inside with logarithm and from outside with exponential, 
 this decomposition has some multiplicative pendant for positive functions.
Thus,  applying the exponential function to both sides of \eqref{eq:even_odd}
and noting that for every $x \in \R$ there is a unique $u \in (0, + \infty)$ such that $x=\log{u}$, we get
 
 \begin{equation}
e^{f(\log{u})}=
e^{\frac{f(\log{u})+f(-\log{u})}{2}}
e^{\frac{f(\log{u})-f(-\log{u})}{2}}, \qquad u \in (0, + \infty).
\end{equation}
 
 Since $-\log{u}=\log{\left(\frac{1}{u}\right)}$ for positive $u$, we have
 
   \begin{equation*}
e^{f(x)}=
\sqrt{e^{f(\log{u})\cot f(\log{\frac{1}{u}})}} \cdot
\sqrt{e^{f(\log{u})-f(\log{\frac{1}{u}})}},
\qquad u \in (0, + \infty),
\end{equation*}

whence

\begin{equation*}
e^{f(\log{u})}=
\sqrt{e^{f(\log{u}) \cdot f(\log{\frac{1}{u}})}}+
\sqrt{e^{f(\log{u})-f(\log{\frac{1}{u}})}},
\qquad u \in (0, + \infty).
\end{equation*}

Thus, the function $g:=\exp \circ f \circ \log: (0, + \infty) \to (0, + \infty)$ satisfies

\begin{equation*}
g(u)=
\underbrace{\sqrt{g(u)g\left( \frac{1}{u}\right)}}_{g_{me}(u)}
\underbrace{\sqrt{\frac{g(u)}{g\left( \frac{1}{u}\right)}}}_{g_{mo}(u)}
\qquad u \in (0, + \infty).
\end{equation*}

The function $g_{me}$ is called the multiplicative even part,
and $g_{mo}$ the multiplicative odd part of $g$.
A positive function which coincides with its multiplicative even part [with its multiplicatively odd part],
i.e. $g_{me}=g$ [$g_{mo}=g$], 
is called multiplicatively even [multiplicatively odd].

It follows that a multiplicatively even function is characterized by $g(u)=g(\frac{1}{u})$ (multiplicatively even functions remain invariant under inversion)
and a multiplicative odd function by $g(\frac{1}{u})=\frac{1}{g(u)}$.

In connection with the decomposition of a function into even and odd part the arithmetic mean plays a prominent role, namely

\begin{eqnarray*}
f_{e}(x)&=\frac{f(x)+f(-x)}{2}\\
&=A(f(x), f(-x))
\end{eqnarray*}
and
\begin{eqnarray*}
f_{o}(x)&=\frac{f(x)-f(-x)}{2}\\
&=A(f(x), -f(-x))
\end{eqnarray*}
where $A:\R^2 \to \R$ defined by $A(x,y)=\frac{x+y}{2}$ is the arithmetic mean.

Later on, 
when decomposing a positive function into multiplicative even and multiplicative odd part,
the geometric mean appeared naturally, since

\begin{eqnarray*}
g_{me}(u)&={\sqrt{g(u) g (1/u) }}\\
&=G(g(u), g\left(1/u\right) )
\end{eqnarray*}
and
\begin{eqnarray*}
g_{mo}(u)&={\sqrt{\frac{g(u)}{g\left( 1/u \right)}}}\\
&=G(g(u), 1/g(1/u))
\end{eqnarray*}
where $G:(0,+\infty)^2 \to (0,+\infty)$ defined by $G(x,y)=\sqrt{xy}$ is the geometric mean
(and reflection at the $y$-axis becomes inversion).

More generally, we may ask how a (possibly non quasi-arithmetic) mean $M: I^2 \to I$
gives rise to a suitable decomposition of a given function defined on this interval.

\section{Some examples:  decomposition of classical functions into odd and even part and their conjugates }
To understand a little better the decomposition of a function defined on the reals
into odd and even part and how conjugation gives rise to a positive function 
with multiplicative odd and multiplicative even part, we consider some classical functions.

\subsection{Sine function}
Let $f: \R \to [-1, 1]$ be $f(x)=\sin{x}$.
It is known that the sine function is odd, namely, it holds
$f(-x)=-f(x)$ for all $x \in \R$.
Its exponential conjugate $g:=\exp \circ f \circ \log $ reads $g: (0, + \infty) \to [\frac{1}{e},  e]$ with $g(u)=e^{\sin{\left( \log{u}\right)}}$
$g(\frac{1}{u})=\frac{1}{g(u)}$ for all $u \in (0, + \infty)$.

\subsection{Cosine function}
Let $g: \R \to [-1, 1]$ be $g(x)=\cos{x}$.
It is known that the cosine function is even, which means that it holds
$g(-x)=g(x)$ for all $x \in \R$.
Its exponential conjugate $h:=\exp \circ f \circ \log $ reads $h: (0, + \infty) \to [\frac{1}{e},  e]$ with $h(u)=e^{\cos{\left( \log{u}\right)}}$
satisfies $h\left(\frac{1}{u}\right)=h(u)$ for all $u \in (0, + \infty)$.

\subsection{Power functions}
Let $f: \R \to \R$ be $f(x)=x^n$ for $n \in \N$ fixed. 
Apparently, $f$ is even for $n$ even, and $f$ is odd for odd exponents. 
Thus,
$g: (0, + \infty) \to \R$ defined by
$g(u):=e^{f(\log{u})}=e^{(\log{u})^n}=u^{(\log{u})^{n-1}}$
is multiplicatively even when \( n \) is even, and multiplicatively odd when \( n \) is odd.

\subsection{Exponential functions}
Let $f: \R \to (0,+\infty)$ be defined by $f(x)=a^x$ for some positive $a$.
Its decomposition into even and odd part reads
$f_e (x)=\frac{a^x +a^{-x}}{2}$
and
$f_o (x)=\frac{a^x -a^{-x}}{2}$. 
Its exponential conjugate $g: (0,+\infty) \to (0,+\infty)$ defined by $g:=\exp \circ f \circ \log$ reads

\begin{eqnarray*}
g(u)&= e^{a^{\log{u}}}\\
&= u^{\log{a}}
\end{eqnarray*}
having multiplicatively even part
$g_{me}(u)=\sqrt{e^{u+\frac{1}{u}}}$,
and multiplicatively odd part
$g_{me}(u)=\sqrt{e^{u-\frac{1}{u}}}$.

\section{Means in a linear space and how to get their weight functions}
The classical notion of real means using inequalities can be generalized to mean in a linear space 
(cf. \cite{PalMer2018}).
In a nutshell, a mean in a linear space (of $k$ variables, $k \geq 2$) is an affine combination of its variables where coefficients may depend on the variables.
To illustrate this idea, we consider the case of $k=2$ variables. 
For simplicity, we suppress in our notation dependencies on the variables in the weight functions, for which we usually use Greek letters.
Thus, $\lambda$ denotes in fact a function of several variables, for instance $\lambda=\lambda (x,y)$ in the two-variable case.
A two-variable mean in a linear space $M: I^2 \to I$ is thus a function of the form
\begin{equation*}
M(x,y)= \lambda x + (1-\lambda)y
\end{equation*}
where $\lambda: I^2 \to [0, + \infty]$ is the weight function of the mean.
Since $M(x,y)=y + \lambda (x-y)$ for all $x,y \in I$, 
we easily obtain $\lambda=\frac{M(x,y)-y}{x-y}$ whenever $x \neq y$.
{On the diagonal, i.e., when the variables are all equal, the weight functions of a mean are indeterminate.}
How about the three variables case? 
Let $M: I^3 \to I$ be a three variable mean:
\begin{eqnarray*}
	M(x,y,z) &= \lambda x + \mu y + (1-(\lambda+\mu))z\\
	&=
	\lambda (x-z) + \mu (y-z) +z.
\end{eqnarray*} 
The problem to find their two weight functions $\lambda, \mu: I^2 \to [0, +\infty]$ is slightly more involved.
We have
\begin{align}
	M(x,x,z)= (\lambda+\mu) (x-z)  + z,\qquad { x \neq y},
	\label{eq:Mean3diag21}
\end{align}
thus
\begin{equation}
	\lambda+\mu=\frac{M(x,x,z)-z }{x-z}, \qquad x \neq z,
	\label{eq:Mean3diag21}
\end{equation}
and
\begin{eqnarray*}
	M(x,y,y) &= \lambda x + (1-\lambda) y\\
	&=
	\lambda (x-y) +y.
\end{eqnarray*}
Consequently, 
\begin{equation*}
	\lambda=\lambda(x,y,y)=\frac{M(x,y,y)-y}{x-y}, \qquad x \neq y,
\end{equation*}
and thus, by \eqref{eq:Mean3diag21},
\begin{equation*}
	\mu =
	\frac{M(x,x,z)-z}{x-z}-\frac{M(x,y,y)-y}{x-y}, \qquad x \neq y, x \neq z.
\end{equation*}

\begin{remark}
	For a mean of $k$ variables, $k \in \N, k \geq 2$, $M: I^k \to I$, $(x_1, \ldots, x_k) \mapsto M(x_1, \ldots, x_k)$ 
	the $i$-th weight function, which is the function in front of the $i$-th variable, 
	equals the mean evaluated at the $i$-th variable at the $i$-th position and the $k$-th at all other positions minus the $z$-th variable devided by the difference of the $i$-th and the $k$-th variable, for $1 \leq i \leq k-1$.
    After having all $k-1$ independent weight functions at our disposition, the remaining one in front of the last variable equals $1-(\lambda_1+ \cdots +\lambda_{k-1})$.	
   
\end{remark}

\begin{remark}
    Deriving the weight functions for a general three variable mean, 
    we did a serious mistake:
    putting $x=y$, we obtained an expression for $\lambda + \mu$,
    and putting $y=z$, an expression for $\lambda$.
    In the end (namely when we went back to our original equation), 
    these additional assumptions ("conditional diagonality") fell under the table.
    The problem here is that the 1st summand is defined for 
    $x \neq z$ and the 2nd summand for $x \neq y$
    Our derivations lack rigour.
    \end{remark}

\section{Means and their Weight Functions}
By the preceding section the weight function of a bivariable mean $M:I^2 \to I$ is given by
\begin{equation}
	\lambda (x,y) =
	\frac{M(x,y)-y}{x-y}, \qquad x \neq y;
	\label{eq:lambda}
\end{equation}
$\lambda (x,x)$ is undetermined.

It is of main interest to charaterize a mean by its weight function.
More generally, when $M$ is any bivariable function on some intervall $I$,
we call the function $\lambda$ defined by \eqref{eq:lambda} its weight function.

Let us consider next a toy example where $M$ is a beta-type function, i.e.
\begin{equation*}
	M(x,y) =
	\frac{f(x)f(y)}{f(x+y)}, \qquad x \neq y.
\end{equation*}
Its weight function is given by
\begin{equation*}
\lambda_f (x,y)=\frac{f(x) f(y)-y f(x+y)}{(x-y) f(x+y)}	
\end{equation*}

It is known 
that a bivariable beta-type function is a mean if $f(x)=2x$
resulting in the weight function
\begin{equation*}
	\lambda_H (x,y)=\frac{y}{x+y}.	
\end{equation*}
So a smooth additive function generates the harmonic mean and the corresponding weight $\lambda_H$ is a rational function.

If $f$ is exponential, the beta-type weight function becomes
\begin{equation*}
	\lambda_f (x,y)=\frac{1-y}{x-y}, \quad x \neq y,	
\end{equation*}
having the following relation with the weight function $\lambda_H$
\begin{equation*}
	\lambda_f (x,y)=\lambda_f (x+1-2y,1-y).	
\end{equation*}

\section{Summary}
Generalized homogeneity describes how a function transforms when its inputs are scaled or shifted. The four main types are additive, multiplicative, exponential, and logarithmic homogeneity. Each type characterizes a distinct way the function changes under such transformations.

These generalized homogeneity functions help unify different scaling behaviors and reveal fundamental properties of functions, including how homogeneity influences related functional constructions. This framework extends classical homogeneity concepts to capture a broader range of functional symmetries.


\begin{thebibliography}{9}

\bibitem{Ebanks2023} B. Ebanks, \textit{The Solution of the Cosine–Sine Functional Equation on Semigroups,}  May 27, 2023, Results Math..

\bibitem{Stet2023} H. Stetkær, \textit{On a sine addition/subtraction law on semigroups,} Aequat. Math. (2025), https://doi.org/10.1007/s00010-025-01179-0.
    
\bibitem{PalMer2018} T. Lara, N. Merentes, Z. Pales, R. Quintero, E. Rosales, \textit{On m-Convexity on Real Linear Spaces, } UPI-JMB 2018; 1(2): JMB8.

\bibitem{MatHim2} M. Himmel, J. Matkowski, \textit{Homogeneous beta-type
functions, }J. Class. Anal., Volume 10, Number 1 (2017), 59--66.

\bibitem{Matkowski2012}
J. Matkowski,
\textit{Fixed points and iterations of mean-type mappings.}
Open Mathematics, 10(6), 2215–2228 (2012). Retrieved September 30, 2025, from https://doaj.org/article/889587a4035e4d398843b01c3bf70e69

\bibitem{Matkowski2003}
J. Matkowski,
\textit{Convex functions with respect to a mean and a characterization of the Jensen convexity.}
Real Analysis Exchange, 29(1), 151–162 (2003). Retrieved September 30, 2025, from https://projecteuclid.org/journals/real-analysis-exchange/volume-29/issue-1/Convex-functions-with-respect-to-a-mean-and-a-characterization/rae/1149860188.pdf

\bibitem{Kuczma2} M. Kuczma, \textit{An Introduction to the Theory of
Functional Equations and Inequalities}, Uniwersytet \'{S}l{a}ski, Pa\'{n}%
stwowe Wydawnictwo Naukowe, Warszawa - Krak\'{o}w, 1985.

\bibitem{LiuDas2025} D. Liu, S. Das, H. Pao, C. Lin, J. Khim,
\textit{Chebyshev Polynomials - Definition and Properties.}
 Brilliant.org. Retrieved 12:38, September 24, 2025, from https://brilliant.org/wiki/chebyshev-polynomials-definition-and-properties/

\end{thebibliography}
\end{document}